\documentclass{article}
\usepackage{amsmath,amsfonts,amssymb,amsthm,mathtools,bbm}
\usepackage{microtype}
\usepackage{lipsum}
\usepackage{url,hyperref}
\usepackage{indentfirst}
\hypersetup{colorlinks, linkcolor={red}, citecolor={blue}, urlcolor={black}}
\usepackage{comment}
\usepackage[margin=1in]{geometry}
\usepackage{enumerate,enumitem}
\usepackage{subcaption}
\usepackage{tikz}
\usetikzlibrary{calc,matrix,positioning,angles,quotes}
\usetikzlibrary{shapes.geometric}
\usetikzlibrary{patterns,decorations.pathreplacing}
\usepackage{array,multirow}

\allowdisplaybreaks

\DeclareMathOperator\per{per}
\DeclareMathOperator\sgn{sgn}

\newtheorem{theorem}{Theorem}[section]
\newtheorem{lemma}[theorem]{Lemma}

\newtheorem{corollary}[theorem]{Corollary}

\newtheorem{claim}[theorem]{Claim}

\title{Exact enumeration of lozenge tilings of a triangular region}
\author{Jun Yan\thanks{Email: \url{j228yan@uwaterloo.ca}. Part of the work was carried out while supported by
ERC Advanced Grant 883810.}}
\date{}

\begin{document}

\maketitle
\begin{abstract}
We prove that the number of lozenge tilings of a certain triangular region $\mathcal{T}_n$ is given by the formula
\[T_n=\prod_{\substack{1\leq a<b\leq 3n+2\\(a,b)\not=(n+1,2n+2)}}\left|1+\zeta^a+\zeta^b\right|^{1/3},\]
where $\zeta=e^{2\pi i/(3n+3)}$.
This answers a question of Ciucu and Krattenthaler, both by finding the exact formula and by explaining why $T_n$ has many prime factors. The proof reduces the lozenge tiling enumeration problem to evaluating the determinant of the bipartite adjacency matrix $M_n$ of the dual graph of $\mathcal{T}_n$, and then evaluates this determinant by diagonalising $M_n$.
\end{abstract}

\section{Introduction}\label{sec:intro}
Given a geometric region $\mathcal{R}$, a tiling is a covering of $\mathcal{R}$ using some given smaller pieces, so that there is no gap or overlap. Tiling enumeration is an active subfield of Enumerative Combinatorics. It has many fascinating connections with the enumeration of other combinatorial objects like perfect matchings, plane partitions, and lattice paths, and with other areas of mathematics including representation theory, linear algebra, and mathematical physics. We refer interested readers to~\cite{Propp} for a comprehensive survey by Propp on tiling enumeration.

A classical tiling enumeration result was obtained independently in 1961 by Kasteleyn~\cite{K}, and by Temperley and Fisher~\cite{TF}. They proved that the number of tilings of the $m\times n$ grid of squares using dominoes (see Figure~\ref{fig:domloz}), which are $1\times2$ or $2\times1$ rectangles, is given by the beautiful formula
\[\left(\prod_{i=1}^m\prod_{j=1}^n\left(4\cos^2\frac{i\pi}{m+1}+4\cos^2\frac{j\pi}{n+1}\right)\right)^{1/4}.\]

\begin{figure}[h]
    \centering
    \begin{minipage}{0.49\textwidth}
\begin{center}
\begin{tikzpicture}[scale=0.85]
\draw[step=0.5cm,color=black] (-4,-3) grid (-1,-1);

\draw[step=0.5cm,color=black] (0,-3) grid (3,-1);
\filldraw [fill=red, draw=black] (0,-3) rectangle (0.5,-2);
\filldraw [fill=blue, draw=black] (0,-2) rectangle (1,-1.5);
\filldraw [fill=blue, draw=black] (0,-1.5) rectangle (1,-1);
\filldraw [fill=blue, draw=black] (0.5,-3) rectangle (1.5,-2.5);
\filldraw [fill=blue, draw=black] (0.5,-2.5) rectangle (1.5,-2);
\filldraw [fill=blue, draw=black] (1.5,-3) rectangle (2.5,-2.5);
\filldraw [fill=red, draw=black] (2.5,-3) rectangle (3,-2);
\filldraw [fill=red, draw=black] (2.5,-2) rectangle (3,-1);
\filldraw [fill=red, draw=black] (1.5,-2.5) rectangle (2,-1.5);
\filldraw [fill=red, draw=black] (2,-2.5) rectangle (2.5,-1.5);
\filldraw [fill=blue, draw=black] (1.5,-1.5) rectangle (2.5,-1);
\filldraw [fill=red, draw=black] (1,-2) rectangle (1.5,-1);
\end{tikzpicture}

\end{center}
\end{minipage}
\begin{minipage}{0.49\textwidth}
\begin{center}
\begin{tikzpicture}[scale=0.85]
\draw (0,0) -- (1.5,0);
\draw ($(-0.25,{0.25*sqrt(3)})$) -- ($(1.75,{0.25*sqrt(3)})$);
\draw ($(-0.5,{0.5*sqrt(3)})$) -- ($(2,{0.5*sqrt(3)})$);
\draw ($(-0.75,{0.75*sqrt(3)})$) -- ($(1.75,{0.75*sqrt(3)})$);
\draw ($(-0.5,{sqrt(3)})$) -- ($(1.5,{sqrt(3)})$);
\draw ($(-0.25,{1.25*sqrt(3)})$) -- ($(1.25,{1.25*sqrt(3)})$);

\draw ($(-0.75,{0.75*sqrt(3)})$) -- ($(-0.25,{1.25*sqrt(3)})$);
\draw ($(-0.5,{0.5*sqrt(3)})$) -- ($(0.25,{1.25*sqrt(3)})$);
\draw ($(-0.25,{0.25*sqrt(3)})$) -- ($(0.75,{1.25*sqrt(3)})$);
\draw ($(0,{0*sqrt(3)})$) -- ($(1.25,{1.25*sqrt(3)})$);
\draw ($(0.5,{0*sqrt(3)})$) -- ($(1.5,{1*sqrt(3)})$);
\draw ($(1,{0*sqrt(3)})$) -- ($(1.75,{0.75*sqrt(3)})$);
\draw ($(1.5,{0*sqrt(3)})$) -- ($(2,{0.5*sqrt(3)})$);

\draw ($(2,{0.5*sqrt(3)})$) -- ($(1.25,{1.25*sqrt(3)})$);
\draw ($(1.75,{0.25*sqrt(3)})$) -- ($(0.75,{1.25*sqrt(3)})$);
\draw ($(1.5,{0*sqrt(3)})$) -- ($(0.25,{1.25*sqrt(3)})$);
\draw ($(1,{0*sqrt(3)})$) -- ($(-0.25,{1.25*sqrt(3)})$);
\draw ($(0.5,{0*sqrt(3)})$) -- ($(-0.5,{1*sqrt(3)})$);
\draw ($(0,{0*sqrt(3)})$) -- ($(-0.75,{0.75*sqrt(3)})$);

\draw (4,0) -- (5.5,0);
\draw ($(3.75,{0.25*sqrt(3)})$) -- ($(5.75,{0.25*sqrt(3)})$);
\draw ($(3.5,{0.5*sqrt(3)})$) -- ($(6,{0.5*sqrt(3)})$);
\draw ($(3.25,{0.75*sqrt(3)})$) -- ($(5.75,{0.75*sqrt(3)})$);
\draw ($(3.5,{sqrt(3)})$) -- ($(5.5,{sqrt(3)})$);
\draw ($(3.75,{1.25*sqrt(3)})$) -- ($(5.25,{1.25*sqrt(3)})$);

\draw ($(3.25,{0.75*sqrt(3)})$) -- ($(3.75,{1.25*sqrt(3)})$);
\draw ($(3.5,{0.5*sqrt(3)})$) -- ($(4.25,{1.25*sqrt(3)})$);
\draw ($(3.75,{0.25*sqrt(3)})$) -- ($(4.75,{1.25*sqrt(3)})$);
\draw ($(4,{0*sqrt(3)})$) -- ($(5.25,{1.25*sqrt(3)})$);
\draw ($(4.5,{0*sqrt(3)})$) -- ($(5.5,{1*sqrt(3)})$);
\draw ($(5,{0*sqrt(3)})$) -- ($(5.75,{0.75*sqrt(3)})$);
\draw ($(5.5,{0*sqrt(3)})$) -- ($(6,{0.5*sqrt(3)})$);

\draw ($(6,{0.5*sqrt(3)})$) -- ($(5.25,{1.25*sqrt(3)})$);
\draw ($(5.75,{0.25*sqrt(3)})$) -- ($(4.75,{1.25*sqrt(3)})$);
\draw ($(5.5,{0*sqrt(3)})$) -- ($(4.25,{1.25*sqrt(3)})$);
\draw ($(5,{0*sqrt(3)})$) -- ($(3.75,{1.25*sqrt(3)})$);
\draw ($(4.5,{0*sqrt(3)})$) -- ($(3.5,{1*sqrt(3)})$);
\draw ($(4,{0*sqrt(3)})$) -- ($(3.25,{0.75*sqrt(3)})$);

\filldraw [fill=red, draw=black] (4,0) -- (4.5,0) -- ($(4.25,{0.25*sqrt(3)})$) -- ($(3.75,{0.25*sqrt(3)})$) -- cycle;
\filldraw [fill=red, draw=black] ($(3.75,{0.25*sqrt(3)})$) -- ($(4.25,{0.25*sqrt(3)})$) -- ($(4,{0.5*sqrt(3)})$) -- ($(3.5,{0.5*sqrt(3)})$) -- cycle;
\filldraw [fill=red, draw=black] (4.5,0) -- (5,0) -- ($(4.75,{0.25*sqrt(3)})$) -- ($(4.25,{0.25*sqrt(3)})$) -- cycle;
\filldraw [fill=green, draw=black] (5,0) -- ($(5.25,{0.25*sqrt(3)})$) -- ($(5,{0.5*sqrt(3)})$) -- ($(4.75,{0.25*sqrt(3)})$) -- cycle;
\filldraw [fill=blue, draw=black] (5,0) -- ($(5.5,0)$) -- ($(5.75,{0.25*sqrt(3)})$) -- ($(5.25,{0.25*sqrt(3)})$) -- cycle;
\filldraw [fill=red, draw=black] ($(5,{0.5*sqrt(3)})$) -- ($(5.25,{0.25*sqrt(3)})$) -- ($(5.75,{0.25*sqrt(3)})$) -- ($(5.5,{0.5*sqrt(3)})$) -- cycle;
\filldraw [fill=red, draw=black] ($(5,{0.5*sqrt(3)})$) -- ($(4.5,{0.5*sqrt(3)})$) -- ($(4.25,{0.75*sqrt(3)})$) -- ($(4.75,{0.75*sqrt(3)})$) -- cycle;
\filldraw [fill=blue, draw=black] ($(5,{0.5*sqrt(3)})$) -- ($(4.5,{0.5*sqrt(3)})$) -- ($(4.25,{0.25*sqrt(3)})$) -- ($(4.75,{0.25*sqrt(3)})$) -- cycle;
\filldraw [fill=green, draw=black] ($(4.25,{0.25*sqrt(3)})$) -- ($(4.5,{0.5*sqrt(3)})$) -- ($(4.25,{0.75*sqrt(3)})$) -- ($(4,{0.5*sqrt(3)})$) -- cycle;
\filldraw [fill=red, draw=black] ($(3.75,{0.75*sqrt(3)})$) -- ($(3.25,{0.75*sqrt(3)})$) -- ($(3.5,{0.5*sqrt(3)})$) -- ($(4,{0.5*sqrt(3)})$) -- cycle;
\filldraw [fill=blue, draw=black] ($(3.75,{0.75*sqrt(3)})$) -- ($(3.25,{0.75*sqrt(3)})$) -- ($(3.5,{1*sqrt(3)})$) -- ($(4,{1*sqrt(3)})$) -- cycle;
\filldraw [fill=blue, draw=black] ($(4,{1*sqrt(3)})$) -- ($(3.5,{1*sqrt(3)})$) -- ($(3.75,{1.25*sqrt(3)})$) -- ($(4.25,{1.25*sqrt(3)})$) -- cycle;
\filldraw [fill=green, draw=black] ($(4,{1*sqrt(3)})$) -- ($(4.25,{0.75*sqrt(3)})$) -- ($(4,{0.5*sqrt(3)})$) -- ($(3.75,{0.75*sqrt(3)})$) -- cycle;
\filldraw [fill=green, draw=black] ($(4.25,{1.25*sqrt(3)})$) -- ($(4.5,{1*sqrt(3)})$) -- ($(4.25,{0.75*sqrt(3)})$) -- ($(4,{1*sqrt(3)})$) -- cycle;
\filldraw [fill=blue, draw=black] ($(4.25,{0.75*sqrt(3)})$) -- ($(4.5,{1*sqrt(3)})$) -- ($(5,{1*sqrt(3)})$) -- ($(4.75,{0.75*sqrt(3)})$) -- cycle;
\filldraw [fill=red, draw=black] ($(4.25,{1.25*sqrt(3)})$) -- ($(4.5,{1*sqrt(3)})$) -- ($(5,{1*sqrt(3)})$) -- ($(4.75,{1.25*sqrt(3)})$) -- cycle;
\filldraw [fill=red, draw=black] ($(4.75,{1.25*sqrt(3)})$) -- ($(5,{1*sqrt(3)})$) -- ($(5.5,{1*sqrt(3)})$) -- ($(5.25,{1.25*sqrt(3)})$) -- cycle;
\filldraw [fill=red, draw=black] ($(4.75,{0.75*sqrt(3)})$) -- ($(5,{0.5*sqrt(3)})$) -- ($(5.5,{0.5*sqrt(3)})$) -- ($(5.25,{0.75*sqrt(3)})$) -- cycle;
\filldraw [fill=blue, draw=black] ($(4.75,{0.75*sqrt(3)})$) -- ($(5,{1*sqrt(3)})$) -- ($(5.5,{1*sqrt(3)})$) -- ($(5.25,{0.75*sqrt(3)})$) -- cycle;
\filldraw [fill=green, draw=black] ($(5.5,{0.5*sqrt(3)})$) -- ($(5.25,{0.75*sqrt(3)})$) -- ($(5.5,{1*sqrt(3)})$) -- ($(5.75,{0.75*sqrt(3)})$) -- cycle;
\filldraw [fill=green, draw=black] ($(5.75,{0.25*sqrt(3)})$) -- ($(5.5,{0.5*sqrt(3)})$) -- ($(5.75,{0.75*sqrt(3)})$) -- ($(6,{0.5*sqrt(3)})$) -- cycle;
\end{tikzpicture}
\end{center}
\end{minipage}
    \caption{On the left, the $4\times 6$ grid of squares and a domino tiling. On the right, the centrally symmetric hexagonal region $H(3,3,2)$ and a lozenge tiling.}\label{fig:domloz}
\end{figure}

Another commonly studied type of tiling enumeration problem concerns lozenge tilings. In this setting, the region is a grid of equilateral triangles, and the small pieces used are lozenges, which are two equilateral triangles glued together along an edge. In 1989, David and Tomei~\cite{DT} observed that lozenge tilings of the centrally symmetric hexagonal region $H(a,b,c)$ with side lengths $a,b,c,a,b,c$ in order (see Figure~\ref{fig:domloz}) are in bijection with plane partitions that fit inside an $a\times b\times c$ box. A celebrated result of MacMahon~\cite{M} proves that the number of such plane partitions is given by the following triple product formula, which then also counts lozenge tilings of $H(a,b,c)$, 
\[\prod_{i=1}^a\prod_{j=1}^b\prod_{k=1}^c\frac{i+j+k-1}{i+j+k-2}.\]

In this paper, we enumerate lozenge tilings of the triangular region $\mathcal{T}_n$ in Figure~\ref{fig:intro}. This problem was posed by Ciucu and Krattenthaler~{\cite[Problem 1.5]{CK}} in 2002 after they successfully enumerated lozenge tilings of several regions related to $H(a,b,c)$, and is included by Lai~{\cite[Problem 24]{L}} in his recent list of open problems in tiling enumeration. As observed by Ciucu and Krattenthaler, and presented in Table~\ref{table:1}, the number of lozenge tilings of $\mathcal{T}_n$, denoted by $T_n$, contains a substantial number of prime factors, which usually indicates the existence of a nice product formula. 
\begin{figure}[h]
    \centering
    \newcommand\hex{}
\def\hex[#1](#2:#3){
\draw[#1] ($({#2*sqrt(3)},#3)$) -- ($({(#2+0.25)*sqrt(3)},{0.25+#3})$);
\draw[#1] ($({#2*sqrt(3)},#3)$) -- ($({(#2-0.25)*sqrt(3)},{0.25+#3})$);
\draw[#1] ($({(#2-0.25)*sqrt(3)},{0.25+#3})$) -- ($({(#2-0.25)*sqrt(3)},{0.75+#3})$);
\draw[#1] ($({(#2+0.25)*sqrt(3)},{0.25+#3})$) -- ($({(#2+0.25)*sqrt(3)},{0.75+#3})$);
\draw[#1] ($({#2*sqrt(3)},{1+#3})$) -- ($({(#2+0.25)*sqrt(3)},{0.75+#3})$);
\draw[#1] ($({#2*sqrt(3)},{1+#3})$) -- ($({(#2-0.25)*sqrt(3)},{0.75+#3})$);
\draw[#1] ($({#2*sqrt(3)},{1+#3})$) -- ($({#2*sqrt(3)},{#3})$);
\draw[#1] ($({(#2+0.25)*sqrt(3)},{0.25+#3})$) -- ($({(#2-0.25)*sqrt(3)},{0.75+#3})$);
\draw[#1] ($({(#2-0.25)*sqrt(3)},{0.25+#3})$) -- ($({(#2+0.25)*sqrt(3)},{0.75+#3})$);
}

\newcommand\xeh{}
\def\xeh[#1](#2:#3){
\draw[#1] ($(#2,{#3*sqrt(3)})$) -- ($(#2+0.5,{#3*sqrt(3)})$);
\draw[#1] ($(#2,{(#3+0.5)*sqrt(3)})$) -- ($(#2+0.5,{(#3+0.5)*sqrt(3)})$);
\draw[#1] ($(#2,{#3*sqrt(3)})$) -- ($(#2-0.25,{(#3+0.25)*sqrt(3)})$);
\draw[#1] ($(#2,{(#3+0.5)*sqrt(3)})$) -- ($(#2-0.25,{(#3+0.25)*sqrt(3)})$);
\draw[#1] ($(#2+0.5,{#3*sqrt(3)})$) -- ($(#2+0.75,{(#3+0.25)*sqrt(3)})$);
\draw[#1] ($(#2+0.5,{(#3+0.5)*sqrt(3)})$) -- ($(#2+0.75,{(#3+0.25)*sqrt(3)})$);
}

\begin{minipage}{0.18\textwidth}
\begin{center}
\begin{tikzpicture}[scale=0.8]
\hex[](0.75:0.25);
\end{tikzpicture}
\end{center}
\end{minipage}
\begin{minipage}{0.18\textwidth}
\begin{center}
\begin{tikzpicture}[scale=0.8]
\hex[](0.5:-0.5);
\hex[](1:-0.5);
\hex[](0.75:0.25);
\end{tikzpicture}
\end{center}
\end{minipage}
\begin{minipage}{0.18\textwidth}
\begin{center}
\begin{tikzpicture}[scale=0.8]
\hex[](0.25:-1.25);
\hex[](0.75:-1.25);
\hex[](1.25:-1.25);
\hex[](0.5:-0.5);
\hex[](1:-0.5);
\hex[](0.75:0.25);
\end{tikzpicture}
\end{center}
\end{minipage}
\begin{minipage}{0.18\textwidth}
\begin{center}
\begin{tikzpicture}[scale=0.8]
\hex[](0:-2);
\hex[](0.5:-2);
\hex[](1:-2);
\hex[](1.5:-2);
\hex[](0.25:-1.25);
\hex[](0.75:-1.25);
\hex[](1.25:-1.25);
\hex[](0.5:-0.5);
\hex[](1:-0.5);
\hex[](0.75:0.25);
\end{tikzpicture}
\end{center}
\end{minipage}
\begin{minipage}{0.18\textwidth}
\begin{center}
\begin{tikzpicture}[scale=0.8]
\hex[](0.25:-1.25);
\hex[](0.75:-1.25);
\hex[](1.25:-1.25);
\hex[](0.5:-0.5);
\hex[](1:-0.5);
\hex[](0.75:0.25);

\filldraw [fill=red,draw] ($({1*sqrt(3)},{-0.5})$) -- ($({1.25*sqrt(3)},{-0.25})$) -- ($({1*sqrt(3)},{0})$) -- ($({0.75*sqrt(3)},{-0.25})$) -- cycle;
\filldraw [fill=red,draw] ($({0.75*sqrt(3)},{0.25})$) -- ($({1*sqrt(3)},{0.5})$) -- ($({0.75*sqrt(3)},{0.75})$) -- ($({0.5*sqrt(3)},{0.5})$) -- cycle;
\filldraw [fill=red,draw] ($({0.75*sqrt(3)},{-1.25})$) -- ($({1*sqrt(3)},{-1})$) -- ($({0.75*sqrt(3)},{-0.75})$) -- ($({0.5*sqrt(3)},{-1})$) -- cycle;
\filldraw [fill=red,draw] ($({0.25*sqrt(3)},{-1.25})$) -- ($({0.5*sqrt(3)},{-1})$) -- ($({0.25*sqrt(3)},{-0.75})$) -- ($({0*sqrt(3)},{-1})$) -- cycle;
\filldraw [fill=red,draw] ($({1.25*sqrt(3)},{-1.25})$) -- ($({1.5*sqrt(3)},{-1})$) -- ($({1.25*sqrt(3)},{-0.75})$) -- ($({1*sqrt(3)},{-1})$) -- cycle;
\filldraw [fill=red,draw] ($({0.5*sqrt(3)},{-1})$) -- ($({0.75*sqrt(3)},{-0.75})$) -- ($({0.5*sqrt(3)},{-0.5})$) -- ($({0.25*sqrt(3)},{-0.75})$) -- cycle;

\filldraw [fill=blue,draw] ($({0.25*sqrt(3)},{-0.25})$) -- ($({0.25*sqrt(3)},{0.25})$) -- ($({0.5*sqrt(3)},{0.5})$) -- ($({0.5*sqrt(3)},{0})$) -- cycle;
\filldraw [fill=blue,draw] ($({0.25*sqrt(3)},{-0.75})$) -- ($({0.25*sqrt(3)},{-0.25})$) -- ($({0.5*sqrt(3)},{0})$) -- ($({0.5*sqrt(3)},{-0.5})$) -- cycle;
\filldraw [fill=blue,draw] ($({0*sqrt(3)},{-1})$) -- ($({0*sqrt(3)},{-0.5})$) -- ($({0.25*sqrt(3)},{-0.25})$) -- ($({0.25*sqrt(3)},{-0.75})$) -- cycle;
\filldraw [fill=blue,draw] ($({0.75*sqrt(3)},{-0.25})$) -- ($({0.75*sqrt(3)},{0.25})$) -- ($({1*sqrt(3)},{0.5})$) -- ($({1*sqrt(3)},{0})$) -- cycle;
\filldraw [fill=blue,draw] ($({0.5*sqrt(3)},{0.5})$) -- ($({0.5*sqrt(3)},{1})$) -- ($({0.75*sqrt(3)},{1.25})$) -- ($({0.75*sqrt(3)},{0.75})$) -- cycle;
\filldraw [fill=blue,draw] ($({1*sqrt(3)},{-1})$) -- ($({1*sqrt(3)},{-0.5})$) -- ($({1.25*sqrt(3)},{-0.25})$) -- ($({1.25*sqrt(3)},{-0.75})$) -- cycle;

\filldraw [fill=green,draw] ($({1*sqrt(3)},{0})$) -- ($({1*sqrt(3)},{0.5})$) -- ($({1.25*sqrt(3)},{0.25})$) -- ($({1.25*sqrt(3)},{-0.25})$) -- cycle;
\filldraw [fill=green,draw] ($({0.5*sqrt(3)},{0})$) -- ($({0.5*sqrt(3)},{0.5})$) -- ($({0.75*sqrt(3)},{0.25})$) -- ($({0.75*sqrt(3)},{-0.25})$) -- cycle;
\filldraw [fill=green,draw] ($({0.75*sqrt(3)},{0.75})$) -- ($({0.75*sqrt(3)},{1.25})$) -- ($({1*sqrt(3)},{1})$) -- ($({1*sqrt(3)},{0.5})$) -- cycle;
\filldraw [fill=green,draw] ($({0.75*sqrt(3)},{-0.75})$) -- ($({0.75*sqrt(3)},{-0.25})$) -- ($({1*sqrt(3)},{-0.5})$) -- ($({1*sqrt(3)},{-1})$) -- cycle;
\filldraw [fill=green,draw] ($({1.25*sqrt(3)},{-0.75})$) -- ($({1.25*sqrt(3)},{-0.25})$) -- ($({1.5*sqrt(3)},{-0.5})$) -- ($({1.5*sqrt(3)},{-1})$) -- cycle;
\filldraw [fill=green,draw] ($({0.5*sqrt(3)},{-0.5})$) -- ($({0.5*sqrt(3)},{0})$) -- ($({0.75*sqrt(3)},{-0.25})$) -- ($({0.75*sqrt(3)},{-0.75})$) -- cycle;
\end{tikzpicture}
\end{center}
\end{minipage}

\medskip

\begin{minipage}{0.18\textwidth}
\begin{center}
$\mathcal{T}_1$
\end{center}
\end{minipage}
\begin{minipage}{0.18\textwidth}
\begin{center}
$\mathcal{T}_2$
\end{center}
\end{minipage}
\begin{minipage}{0.18\textwidth}
\begin{center}
$\mathcal{T}_3$
\end{center}
\end{minipage}
\begin{minipage}{0.18\textwidth}
\begin{center}
$\mathcal{T}_4$
\end{center}
\end{minipage}
\begin{minipage}{0.18\textwidth}
\begin{center}

\end{center}
\end{minipage}


\medskip

\medskip

\begin{minipage}{0.18\textwidth}
\begin{center}
\begin{tikzpicture}[scale=0.5]
\xeh[](0:0);
\end{tikzpicture}
\end{center}
\end{minipage}
\begin{minipage}{0.18\textwidth}
\begin{center}
\begin{tikzpicture}[scale=0.5]
\xeh[](0:0);
\xeh[](0:-0.5);
\xeh[](-0.75:-0.75);
\xeh[](0.75:-0.75);
\end{tikzpicture}
\end{center}
\end{minipage}
\begin{minipage}{0.18\textwidth}
\begin{center}
\begin{tikzpicture}[scale=0.5]
\xeh[](0:0);
\xeh[](0:-0.5);
\xeh[](-0.75:-0.75);
\xeh[](0.75:-0.75);
\xeh[](-0.75:-1.25);
\xeh[](-1.5:-1.5);
\xeh[](0:-1.5);
\xeh[](0.75:-1.25);
\xeh[](1.5:-1.5);
\end{tikzpicture}
\end{center}
\end{minipage}
\begin{minipage}{0.18\textwidth}
\begin{center}
\begin{tikzpicture}[scale=0.5]
\xeh[](0:0);
\xeh[](0:-0.5);
\xeh[](-0.75:-0.75);
\xeh[](0.75:-0.75);
\xeh[](-0.75:-1.25);
\xeh[](0.75:-1.25);
\xeh[](-1.5:-1.5);
\xeh[](0:-1.5);
\xeh[](1.5:-1.5);
\xeh[](-1.5:-2);
\xeh[](0:-2);
\xeh[](1.5:-2);
\xeh[](-2.25:-2.25);
\xeh[](-0.75:-2.25);
\xeh[](0.75:-2.25);
\xeh[](2.25:-2.25);
\end{tikzpicture}
\end{center}
\end{minipage}
\begin{minipage}{0.18\textwidth}
\begin{center}
\begin{tikzpicture}[scale=0.5]
\xeh[thin, densely dotted](0:0);
\xeh[thin, densely dotted](0:-0.5);
\xeh[thin, densely dotted](-0.75:-0.75);
\xeh[thin, densely dotted](0.75:-0.75);
\xeh[thin, densely dotted](-0.75:-1.25);
\xeh[thin, densely dotted](0.75:-1.25);
\xeh[thin, densely dotted](-1.5:-1.5);
\xeh[thin, densely dotted](0:-1.5);
\xeh[thin, densely dotted](1.5:-1.5);

\draw[red,thick] ($(1.5,{-1.5*sqrt(3)})$) -- ($(2,{-1.5*sqrt(3)})$);
\draw[red,thick] ($(0,{-1.5*sqrt(3)})$) -- ($(0.5,{-1.5*sqrt(3)})$);
\draw[red,thick] ($(-1.5,{-1.5*sqrt(3)})$) -- ($(-1,{-1.5*sqrt(3)})$);
\draw[red,thick] ($(-0.75,{-1.25*sqrt(3)})$) -- ($(-0.25,{-1.25*sqrt(3)})$);
\draw[red,thick] ($(0,{0*sqrt(3)})$) -- ($(0.5,{0*sqrt(3)})$);
\draw[red,thick] ($(0.75,{-0.75*sqrt(3)})$) -- ($(1.25,{-0.75*sqrt(3)})$);

\draw[green,thick] ($(2,{-1*sqrt(3)})$) -- ($(2.25,{-1.25*sqrt(3)})$);
\draw[green,thick] ($(1.25,{-0.25*sqrt(3)})$) -- ($(1.5,{-0.5*sqrt(3)})$);
\draw[green,thick] ($(0.5,{0.5*sqrt(3)})$) -- ($(0.75,{0.25*sqrt(3)})$);
\draw[green,thick] ($(-0.25,{-0.25*sqrt(3)})$) -- ($(0,{-0.5*sqrt(3)})$);
\draw[green,thick] ($(0.5,{-1*sqrt(3)})$) -- ($(0.75,{-1.25*sqrt(3)})$);
\draw[green,thick] ($(-0.25,{-0.75*sqrt(3)})$) -- ($(0,{-1*sqrt(3)})$);

\draw[blue,thick] ($(1.25,{-1.25*sqrt(3)})$) -- ($(1.5,{-1*sqrt(3)})$);
\draw[blue,thick] ($(-1.75,{-1.25*sqrt(3)})$) -- ($(-1.5,{-1*sqrt(3)})$);
\draw[blue,thick] ($(-1,{-0.5*sqrt(3)})$) -- ($(-0.75,{-0.25*sqrt(3)})$);
\draw[blue,thick] ($(-0.25,{0.25*sqrt(3)})$) -- ($(0,{0.5*sqrt(3)})$);
\draw[blue,thick] ($(-1,{-1*sqrt(3)})$) -- ($(-0.75,{-0.75*sqrt(3)})$);
\draw[blue,thick] ($(0.5,{-0.5*sqrt(3)})$) -- ($(0.75,{-0.25*sqrt(3)})$);
\end{tikzpicture}
\end{center}
\end{minipage}

\medskip

\begin{minipage}{0.18\textwidth}
\begin{center}
$\mathcal{G}_1$
\end{center}
\end{minipage}
\begin{minipage}{0.18\textwidth}
\begin{center}
$\mathcal{G}_2$
\end{center}
\end{minipage}
\begin{minipage}{0.18\textwidth}
\begin{center}
$\mathcal{G}_3$
\end{center}
\end{minipage}
\begin{minipage}{0.18\textwidth}
\begin{center}
$\mathcal{G}_4$
\end{center}
\end{minipage}
\begin{minipage}{0.18\textwidth}
\begin{center}

\end{center}
\end{minipage}
    \caption{On the left, the triangular regions $\mathcal{T}_n$ for $n=1,2,3,4$, and the corresponding graphs $\mathcal{G}_n$ for $n=1,2,3,4$. On the right, a lozenge tiling of $\mathcal{T}_3$, and the corresponding perfect matching in $\mathcal{G}_3$.}\label{fig:intro}
\end{figure}

\begin{table}[h]
\renewcommand{\arraystretch}{1.25}
\begin{center}
\begin{tabular}{c||c|c|c|c|c|c|c}
 $n$ & 1 & 2 & 3 & 4 & 5 & 6 & 7 \\\hline
 $T_n$ & 2 & 9 & 104 & 3100 & 240426 & 48701198 & 25827984000 \\\hline
 factorisation & 2 & $3^2$ & $2^3\cdot13$ & $2^2\cdot 5^2\cdot 31$ & $2\cdot3^2\cdot19^2\cdot37$ & $2\cdot 7^3\cdot13\cdot43\cdot127$ & $2^7\cdot3^5\cdot5^3\cdot7\cdot13\cdot73$ \\   
\end{tabular}
\end{center}
\caption{Values of $T_n$ for $1\leq n\leq 7$, and their prime factorisations.}\label{table:1}
\end{table}

Our main result, Theorem~\ref{thm:main} below, provides an exact product formula for $T_n$, the number of lozenge tilings of $\mathcal{T}_n$. Instead of counting lozenge tilings directly, we count perfect matchings of the graph $\mathcal{G}_n$ depicted in Figure~\ref{fig:intro}. This is obtained by taking the dual of the underlying planar graph of the region $\mathcal{T}_n$, so that vertices in $\mathcal{G}_n$ correspond to triangular faces in $\mathcal{T}_n$, and two vertices in $\mathcal{G}_n$ are connected with an edge if and only if the two corresponding triangles in $\mathcal{T}_n$ can be covered by a lozenge. It follows that lozenge tilings of $\mathcal{T}_n$ are in bijection with perfect matchings of $\mathcal{G}_n$.  
\begin{theorem}\label{thm:main}
For every positive integer $n$, let $\zeta=e^{2\pi i/(3n+3)}$. Then, the number of lozenge tilings of the region $\mathcal{T}_n$ and the number of perfect matchings of the graph $\mathcal{G}_n$ are both
\[T_n=\prod_{\substack{1\leq a<b\leq 3n+2\\(a,b)\not=(n+1,2n+2)}}\left|1+\zeta^a+\zeta^b\right|^{1/3}.\]
\end{theorem}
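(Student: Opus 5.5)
The plan is to follow the strategy announced in the abstract: write $T_n$ as $|\det M_n|$ for a suitable bipartite adjacency matrix $M_n$ of $\mathcal{G}_n$, then compute that determinant by diagonalising $M_n$ via a discrete Fourier transform of order $3n+3$.

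\textbf{Step 1 (Kasteleyn signs).} The region $\mathcal{T}_n$ is built from $n(n+1)/2$ hexagons. Its unit triangles split into up-pointing and down-pointing ones of equal number $N=3n(n+1)/2$, so $\mathcal{G}_n$ is bipartite with parts $U$ and $D$. I first check that every bounded face of $\mathcal{G}_n$ is a hexagon. This holds because $\mathcal{T}_n$ has no interior holes: each interior lattice vertex of the triangular grid becomes a $6$-cycle. Hence the Kasteleyn condition is satisfied with all edge weights equal to $1$. Kasteleyn's theorem then gives $T_n=|\det M_n|$, where $M_n$ is the $N\times N$ $0/1$ matrix with rows indexed by $U$, columns by $D$, and entries recording adjacency. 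I would include a short argument for this, or cite it as the standard fact for lozenge tilings of simply connected regions.

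\textbf{Step 2 (a cyclic structure).} The factor $1/3$ in the exponent and the modulus $3n+3$ suggest the following. Since $\mathcal{T}_n$ has $120^\circ$ rotational symmetry, I would look for a labelling of $U$ and $D$ by residues (or pairs of residues) modulo $3n+3$ that turns $M_n$ into a compression of a translation-invariant operator. Concretely, label each unit triangle by lattice coordinates $(x,y)$ and note that the three neighbours of a triangle differ by the three unit vectors. This suggests embedding $\mathcal{G}_n$ into the honeycomb graph on the torus $(\mathbb{Z}/(3n+3))^2$. The boundary of $\mathcal{T}_n$ should then correspond to a quotient by a reflection or rotation group, in the spirit of the method of images used for the Temperley--Fisher formula. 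On the torus, the honeycomb adjacency operator is diagonalised by the characters $\chi_{a,b}$, with eigenvalues $1+\zeta^a+\zeta^b$.

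The key step is to show that $M_n^{\oplus 3}$, or equivalently $M_n$ together with its two rotated copies, is similar to the torus operator restricted to a subspace. This subspace should be spanned by the characters with $1\le a<b\le 3n+2$, suitably antisymmetrised, with the single pair $(a,b)=(n+1,2n+2)$ removed. Note that $1+\zeta^{n+1}+\zeta^{2n+2}=0$, since $\zeta^{n+1}$ is a primitive cube root of unity. This explains the exclusion: that mode lies in the kernel of the torus operator and is not part of the finite region.

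\textbf{Step 3 (counting).} Once the correspondence is established, $|\det M_n|^3=\prod|1+\zeta^a+\zeta^b|$ over the admissible pairs, which gives the formula. As a consistency check, the number of admissible pairs is $\binom{3n+2}{2}-1$. The number of factors should match three copies of the dimension $N$, up to pairs whose factor has modulus $1$. I would verify this accounting and check the formula numerically against Table~\ref{table:1} for small $n$.

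\textbf{Main obstacle.} The hard part is Step 2: finding explicit eigenvectors of $M_n$ (or of $M_n M_n^{*}$) that are antisymmetrised Fourier modes $\sum_{g}\sgn(g)\,\chi_{g(a,b)}$ over an appropriate group action. One must then verify that they satisfy the boundary conditions exactly, i.e.\ that the image contributions cancel on the boundary triangles of $\mathcal{T}_n$. I would try this first for $n=1,2$ by direct computation to guess the correct group, which I expect is generated by $(a,b)\mapsto(b,a)$ and $(a,b)\mapsto(-a,b-a)$, both preserving $|1+\zeta^a+\zeta^b|$. After that, I would prove linear independence of the resulting vectors and their completeness by a dimension count.
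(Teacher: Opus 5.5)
Your Step 1 matches the paper, and Step 2 names the paper's strategy: a method of images using $S_3$-antisymmetrised characters of order $3n+3$. Your guessed group, generated by $(a,b)\mapsto(b,a)$ and $(a,b)\mapsto(-a,b-a)$, is precisely the permutation action on normalised triples $(1,\zeta^a,\zeta^b)$. But the proposal is a plan rather than a proof: it stops where the proof begins, and the three facts that carry the argument are missing.

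(a) \emph{Exact boundary cancellation.} With a suitable origin, label vertices by $(i,j,k)\in\mathbb{Z}^3$ with $i+j+k=1$ for black and $i+j+k=2$ for white, so that black $(i,j,k)$ is adjacent to $(i+1,j,k)$, $(i,j+1,k)$, $(i,j,k+1)$. The sides of $\mathcal{T}_n$ are then the lines $j=k$, $i=j$, $i-k=3n+3$. Put $f^{\alpha,\beta,\gamma}(i,j,k)=\alpha^i\beta^j\gamma^k$ and $F^{\alpha,\beta,\gamma}=\sum_{\sigma\in S_3}\sgn(\sigma)f^{\sigma(\alpha,\beta,\gamma)}$. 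This vanishes whenever two coordinates agree modulo $3n+3$, hence at every missing neighbour, which gives $M_nF_W^{\alpha,\beta,\gamma}=(\alpha+\beta+\gamma)F_B^{\alpha,\beta,\gamma}$. In your $(a,b)$ labelling this needs phases that differ on the two colour classes: $f^{\zeta^a,1,\zeta^b}$ equals $\zeta^af^{1,\zeta^{-a},\zeta^{b-a}}$ on black vertices but $\zeta^{2a}f^{1,\zeta^{-a},\zeta^{b-a}}$ on white ones. So the unweighted sum $\sum_g\sgn(g)\chi_{g(a,b)}$ you wrote does not vanish on the sides.

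(b) \emph{Degenerate triples.} Repeated entries give $F=0$. The triple $\{\delta,\delta\omega,\delta\omega^2\}$ gives $F_B=F_W=0$, because the even $3$-cycle maps it to $\omega$ times itself, while scaling by $\delta$ multiplies $F_B$ by $\delta$ and $F_W$ by $\delta^2$. This, not merely $1+\omega+\omega^2=0$, is the mechanism that removes $(n+1,2n+2)$. The surviving classes modulo permutation and scaling then number exactly $N=3n(n+1)/2$.

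(c) \emph{Linear independence.} You give no argument for it, and a dimension count yields completeness only once independence is known. The paper writes the sum over $B$ as $\frac16$ of the sum over the six coordinate-permuted copies of $B$. It maps these bijectively onto the points of $\{0,\dots,3n+2\}^3$ with distinct coordinates and $i+j+k\equiv1\pmod{3n+3}$, by adding $3n+3$ to negative coordinates. It then adjoins the remaining torus points, where $F$ vanishes, and obtains Gram matrix $(3n+3)^2I$ on both $B$ and $W$ from character orthogonality.

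Your determinant bookkeeping in Steps 2--3 is also mis-framed. Since $M_n\colon\mathbb{C}^W\to\mathbb{C}^B$, what you get is $M_nM_W=M_BD$, with $D$ diagonal and $M_B,M_W$ the matrices of the new bases; this is not a similarity. You therefore also need $|\det M_B|=|\det M_W|$, which the equal Gram matrices in (c) supply. Working with $M_nM_n^*$, as you mention, sidesteps this point but not (c).

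The exponent $1/3$ does not come from $M_n^{\oplus3}$ or from rotated copies of $M_n$: the antisymmetrised vectors attached to all admissible pairs span only an $N$-dimensional space. Instead, consider a nondegenerate class $\{\zeta^p,\zeta^q,\zeta^r\}$ with $0\le p<q<r\le 3n+2$ and gaps $u=q-p$, $v=r-q$, $w=3n+3-u-v$. It has exactly three representatives with $1\le a<b\le 3n+2$, namely $(1,\zeta^u,\zeta^{u+v})$, $(1,\zeta^v,\zeta^{v+w})$ and $(1,\zeta^w,\zeta^{w+u})$. These give proportional vectors and equal values of $|1+\zeta^a+\zeta^b|$. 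The degenerate class $u=v=w=n+1$ contributes only the single excluded pair. Hence $|\det M_n|=\prod|\alpha+\beta+\gamma|$ over one representative per class, and $\binom{3n+2}{2}-1=3N$ holds exactly, with no slack needed for unimodular factors.
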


At first glance, it is not even obvious that the above formula for $T_n$ produces an integer, but this can be proved using Galois Theory. In Section~\ref{sec:factor}, we show that the formula actually explains the extensive factorisations of $T_n$ through the following result. The $n/5$ bound in~\ref{factor4} is likely far from optimal, but we did not pursue further improvement as it is not the main focus of the paper.
\begin{theorem}\label{thm:factor}
The numbers $T_n$ satisfy the following. 
\begin{enumerate}[label=\textup{\alph*)}]
    \item\label{factor1} For each positive integer $n$, let $\zeta=e^{2\pi i/(3n+3)}$, and define a polynomial $P_n(x)$ by 
    \[P_n(x)=\prod_{\substack{1\leq a<b\leq 3n+2\\(a,b)\not=(n+1,2n+2)}}(x+\zeta^a+\zeta^b).\]
    Then, $P_n(x)\in\mathbb{Z}[x]$, $P_n(x)$ can be factored in $\mathbb{Z}[x]$ into polynomials with degrees at most $2n+2$, and $P_n(1)=\pm T_n^3$.
    \item\label{factor2} If $m,n$ are positive integers satisfying $m+1\mid n+1$, then $T_m\mid T_n$.
    \item\label{factor3} Let $\chi_{-3}$ be the primitive quadratic character modulo 3, and let $L(2,\chi_{-3})$ be the Dirichlet $L$-function for $\chi_{-3}$ evaluated at $2$. Then, \[T_n=\exp\left(\frac{9\sqrt 3}{8\pi}L(2,\chi_{-3})n^2+o(n^2)\right)\approx(1.6235)^{n^2}.\]
    \item\label{factor4} For all sufficiently large $n$, $T_n$ contains at least $n/5$ prime factors, counting multiplicity.
\end{enumerate}
\end{theorem}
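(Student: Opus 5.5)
Throughout, write $N=3n+3$ and let $S_n$ be the set of unordered pairs $\{a,b\}$ of distinct nonzero residues mod $N$ other than $\{N/3,2N/3\}$, so that $P_n(x)=\prod_{\{a,b\}\in S_n}(x+\zeta^a+\zeta^b)$. The plan is to deduce all four parts from Theorem~\ref{thm:main} together with the Galois action of $\mathrm{Gal}(\mathbb{Q}(\zeta)/\mathbb{Q})\cong(\mathbb{Z}/N)^\times$.

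For \ref{factor1}, the automorphism $\sigma_k:\zeta\mapsto\zeta^k$, with $k$ a unit mod $N$, sends $\{a,b\}$ to $\{ka,kb\}$. This is again a pair of distinct nonzero residues. Since $3\nmid k$, we have $\{kN/3,2kN/3\}=\{N/3,2N/3\}$, so $\sigma_k$ permutes $S_n$. Split $S_n$ into $\sigma$-orbits $O_j$ and set $Q_j(x)=\prod_{\{a,b\}\in O_j}(x+\zeta^a+\zeta^b)$. Each $Q_j$ is monic, Galois-invariant and has algebraic-integer coefficients, so $Q_j\in\mathbb{Z}[x]$. Moreover $\deg Q_j\le|(\mathbb{Z}/N)^\times|=\varphi(N)\le\tfrac23N=2n+2$, because $3\mid N$. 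Hence $P_n=\prod_j Q_j$ is the required factorisation in $\mathbb{Z}[x]$. Finally $P_n(1)\in\mathbb{Z}$, and Theorem~\ref{thm:main} gives $|P_n(1)|=T_n^3$, so $P_n(1)=\pm T_n^3$.

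For \ref{factor2}, put $d=(n+1)/(m+1)$. Then $\zeta_m=\zeta_n^{d}$ and $\{a,b\}\mapsto\{da,db\}$ embeds $S_m$ into $S_n$; the excluded pair $\{m+1,2m+2\}$ corresponds exactly to $\{n+1,2n+2\}$. The image is Galois-stable, so it is a union of orbits. Hence $P_n=P_m\cdot R$ with $R\in\mathbb{Z}[x]$, and therefore $P_m(1)\mid P_n(1)$. Both values are nonzero, since $1+\zeta^a+\zeta^b=0$ only for the excluded pair. So $T_m^3\mid T_n^3$, which gives $T_m\mid T_n$.

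For \ref{factor3}, we have
\[\log T_n=\tfrac16\sum_{a\ne b,\ a,b\ne0}\log\left|1+\zeta^a+\zeta^b\right|,\]
where the sum runs over ordered pairs and omits the excluded pair $\{N/3,2N/3\}$. This is $N^2/6$ times a Riemann sum for the Mahler measure
\[m(1+x+y)=\frac{3\sqrt3}{4\pi}L(2,\chi_{-3})\]
(Smyth). Since $N^2/6\approx\tfrac32n^2$, the sum yields exactly the constant $\tfrac{9\sqrt3}{8\pi}L(2,\chi_{-3})$. The missing diagonal and axis terms contribute only $O(n\log n)$.

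The main obstacle is the logarithmic singularity of the integrand at $(x,y)=(\omega,\omega^2)$ and $(\omega^2,\omega)$. To handle it, I would split the torus into a small disc $D_\varepsilon$ around these two points and its complement. On the complement, the integrand is continuous and the Riemann sum converges. Inside $D_\varepsilon$, I would use $|1+\zeta^a+\zeta^b|\gg\operatorname{dist}$ to the singular point. The only lattice point at the singularity is the excluded one, and all other lattice points lie at distance $\ge c/N$. Hence their total contribution is $O(\varepsilon^2N^2\log(1/\varepsilon))+O(\log N)$. The corresponding integral over $D_\varepsilon$ is of the same size, so we get $o(n^2)$ after letting $\varepsilon\to0$.

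For \ref{factor4}, write $T_n^3=\prod_j|Q_j(1)|$ using the orbit factorisation from \ref{factor1}. Each $|Q_j(1)|$ is a positive integer. Since $|1+\zeta^a+\zeta^b|\le3$ and $\deg Q_j\le 2n+2$, each factor satisfies $|Q_j(1)|\le3^{2n+2}$. The factors with $|Q_j(1)|\ge2$ therefore number at least $3\log T_n/((2n+2)\log3)$. By \ref{factor3} we have $\log T_n\approx0.4846\,n^2$, so this count is about $0.66n$. Each such factor contributes at least one prime to $T_n^3$, so $\Omega(T_n^3)\ge0.66n$. Since $\Omega(T_n)=\Omega(T_n^3)/3$, we get $\Omega(T_n)\ge0.22n>n/5$ for large $n$.
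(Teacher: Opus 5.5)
Your proposal is correct and follows essentially the same route as the paper. It uses Galois invariance under $\zeta\mapsto\zeta^k$ for integrality together with $\varphi(3n+3)\le 2n+2$, the embedding $\{a,b\}\mapsto\{da,db\}$ for divisibility, a Riemann-sum comparison with Smyth's value of $M(1+x+y)$ for the asymptotics, and the bound $|Q_j(1)|\le 3^{2n+2}$ for counting prime factors. The differences are only refinements: you factor into Galois-orbit polynomials rather than irreducible factors, which also makes the cofactor in (b) visibly integral, and your $\varepsilon$-disc estimate near $(\omega,\omega^2)$ and $(\omega^2,\omega)$ makes rigorous the singularity step that the paper only sketches.
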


We briefly mention two immediate consequences of Theorem~\ref{thm:main} using well-known bijections between lozenge tilings and plane partitions, and between lozenge tilings and non-intersecting lattice paths. 

A plane partition with a given Young diagram shape is a filling of the Young diagram with non-negative integers so that each row and each column is non-increasing. Geometrically, this can be visualised as a stacking of unit cubes on top of the Young diagram so that the numbers of cubes are non-increasing along each row and each column. Using the same idea as David and Tomei~\cite{DT}, if we view the red lozenges in each lozenge tiling of $\mathcal{T}_n$ as the upper faces of stacked unit cubes (see Figure~\ref{fig:parLGV}), then this leads to a bijection between lozenge tilings of $\mathcal{T}_n$ and plane partitions $\pi$ with shape $(n,n-1,\ldots,1)$, such that $\pi(i,j)\leq n+1-\max\{i,j\}$ for all integers $i,j\geq1$ with $i+j\leq n+1$. This gives the following corollary. 
\begin{corollary}\label{cor:PP}
For every positive integer $n$, let $\zeta=e^{2\pi i/(3n+3)}$. Then, the number of plane partitions $\pi$ with shape $(n,n-1,\ldots,1)$, such that $\pi(i,j)\leq n+1-\max\{i,j\}$ for all integers $i,j\geq1$ with $i+j\leq n+1$ is 
\[T_n=\prod_{\substack{1\leq a<b\leq 3n+2\\(a,b)\not=(n+1,2n+2)}}\left|1+\zeta^a+\zeta^b\right|^{1/3}.\]
\end{corollary}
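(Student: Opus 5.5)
Corollary~\ref{cor:PP} follows from Theorem~\ref{thm:main} once we show that the stacked-cube map described before the statement is a bijection between lozenge tilings of $\mathcal{T}_n$ and the stated plane partitions. The count is then $T_n$, and the product formula carries over verbatim. The plan is to make the David--Tomei correspondence precise for this particular region.

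First, fix coordinates. We view $\mathcal{T}_n$ as a region in the triangular lattice and use the standard projection of $\mathbb{Z}^3$ onto the plane along $(1,1,1)$, under which unit cubes project to hexagons split into three lozenges. The red lozenges are the tops of cubes.

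The boundary of $\mathcal{T}_n$ should be the projection of the following surface: the floor is the staircase Young diagram $(n,n-1,\ldots,1)$ in the $xy$-plane, and the walls rise above its boundary up to the ceiling surface $z=n+1-\max\{x,y\}$ above cell $(i,j)$. Equivalently, $\mathcal{T}_n$ is the projection of the region between the minimal stack ($\pi\equiv 0$) and the maximal stack $\pi(i,j)=n+1-\max\{i,j\}$. One checks on the pictures for small $n$ that the union of the cells over these two extreme configurations projects exactly onto $\mathcal{T}_n$. We can also check that the maximal $\pi$ is itself a plane partition, since it is non-increasing in $i$ and in $j$.

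Second, use the standard fact that a lozenge tiling of a simply connected region corresponds, via the height function, to a stepped surface. Here, because the boundary heights are those of the floor and ceiling surfaces described above, the stepped surfaces with this boundary are exactly the graphs of monotone stackings $\pi$ supported on the staircase shape and squeezed between $0$ and the ceiling. Conversely, each such $\pi$ projects to a tiling of $\mathcal{T}_n$. Injectivity and surjectivity both come from the height function being determined by, and determining, the tiling.

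The main obstacle is the first step: verifying that the boundary of $\mathcal{T}_n$ matches precisely the constraints, namely the staircase shape together with the bound $n+1-\max\{i,j\}$. I would verify this by tracing the boundary path of $\mathcal{T}_n$ side by side and comparing it with the projected outline of the extreme surfaces, checking $n=1,2$ against Table~\ref{table:1}. For $n=1$, the stated constraints give $\pi(1,1)\le 1$, so there are $2$ plane partitions, matching $T_1=2$. For $n=2$, the constraints give $9$, matching $T_2=9$. With the bijection established, Theorem~\ref{thm:main} gives the corollary.
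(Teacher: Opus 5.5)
Your proposal follows the paper's argument: the corollary comes from Theorem~\ref{thm:main} through the David--Tomei cube-stacking bijection, with red lozenges as tops of cubes. The paper likewise only asserts that the boundary of $\mathcal{T}_n$ matches the constraints, pointing to a figure rather than checking it for general $n$. If you want that step uniform in $n$, note that the maximal stacking is the cube set $\{(i,j,k)\in\mathbb{Z}_{\geq1}^3: i+j,\ j+k,\ k+i\leq n+1\}$, which is symmetric under permuting coordinates and so has the threefold symmetry of $\mathcal{T}_n$; your small-case checks also extend, since $n=3$ gives $104$.
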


\begin{figure}[h]
\centering
    \input{parLGV}
    \caption{On the left, a lozenge tiling of $\mathcal{T}_3$ and its corresponding plane partition with shape $(3,2,1)$. On the right, the same lozenge tiling of $\mathcal{T}_3$ and its corresponding triple of non-intersecting lattice paths, which do not go above the red line $y=x+3$.}\label{fig:parLGV}
\end{figure}

Using the same cube stacking visualisation, now imagine $n$ bugs trying to crawl from the bottom-left edges to the top-right edges along the stacked cubes. Each valid cube stacking gives rise to an $n$-tuple of disjoint crawl paths. After suitable geometric transformations, these crawl paths turn out to be in bijection with $n$-tuples of non-intersecting lattice paths $(Q_1,\ldots,Q_n)$ that do not go above the line $y=x+n$, such that $Q_i$ goes from $(2i-2,n-i)$ to $(n+i-1,2n+1-2i)$ for every $i\in[n]$. Such collections of non-intersecting lattice paths can be counted by evaluating a determinant using the following version of the Lindstr\"om-Gessel-Viennot Lemma, originating from the work of Lindstr\"om~\cite{Lind} in 1973, and of Gessel and Viennot~\cite{GV} in 1985.
\begin{lemma}[Lindstr\"om-Gessel-Viennot Lemma {\cite[Corollary 10.13.2]{Kra}}]\label{lemma:LGV}
Let $n$ be a positive integer. Let $G$ be a finite directed acyclic graph. Let $A_1,\ldots,A_n,B_1,\ldots,B_n$ be distinct vertices in $G$. Suppose there does not exist a permutation $\sigma\in S_n\setminus\{\textup{id}\}$ such that there are vertex-disjoint directed paths $Q_1,\ldots,Q_n$, with $Q_i$ being a directed path in $G$ from $A_i$ to $B_{\sigma(i)}$ for every $i\in[n]$.

Let $\mathcal{Q}$ be the set of vertex-disjoint tuples of directed paths $(Q_1,\ldots,Q_n)$, such that $Q_i$ is a directed path from $A_i$ to $B_i$ for every $i\in[n]$. Let $D$ be the $n\times n$ matrix whose $(i,j)$-entry is the number of directed paths from $A_i$ to $B_j$ in $G$. Then,
\[|\mathcal{Q}|=\det(D).\]
\end{lemma}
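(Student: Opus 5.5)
The plan is the standard sign-reversing involution argument. Expanding the determinant gives
\[\det(D)=\sum_{\sigma\in S_n}\sgn(\sigma)\prod_{i=1}^n D_{i,\sigma(i)}=\sum_{(\sigma,Q_1,\ldots,Q_n)}\sgn(\sigma),\]
where the last sum runs over all pairs consisting of a permutation $\sigma$ and a tuple of directed paths with $Q_i$ going from $A_i$ to $B_{\sigma(i)}$. Since $G$ is finite and acyclic, each $D_{i,j}$ is finite and every directed path is simple, so this is a finite sum. We split the configurations into vertex-disjoint and non-disjoint ones. By the hypothesis of Lemma~\ref{lemma:LGV}, every vertex-disjoint configuration has $\sigma=\mathrm{id}$. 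So these configurations contribute exactly $|\mathcal{Q}|$, each with sign $+1$. It then suffices to show that the non-disjoint configurations cancel in pairs.

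To cancel them, I will define an involution $\Phi$ on the non-disjoint configurations that reverses $\sgn(\sigma)$. Given such a configuration, do the following.
\begin{enumerate}[label=\textup{(\roman*)}]
\item Let $i$ be the smallest index such that $Q_i$ shares a vertex with some other path.
\item Let $v$ be the first vertex along $Q_i$ that lies on some other path.
\item Let $j\neq i$ be the smallest index with $v\in Q_j$.
\item Swap the tails of $Q_i$ and $Q_j$ after $v$. The new path $Q_i'$ is the part of $Q_i$ up to $v$ followed by the part of $Q_j$ after $v$, and $Q_j'$ is defined symmetrically.
\end{enumerate}
The new configuration has permutation $\sigma\circ(i\,j)$, so its sign is flipped, and it is still non-disjoint since $v$ lies on both $Q_i'$ and $Q_j'$.

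The main obstacle is checking that $\Phi$ is an involution, i.e.\ that the new configuration selects the same triple $(i,v,j)$; swapping the same tails back then restores the original. I would verify the three choices in turn.
\begin{itemize}
\item \emph{Index $i$.} The paths $Q_k$ with $k<i$ met no other path, so in particular they avoid $Q_i$ and $Q_j$ entirely. Hence they still avoid every path after the swap, because the vertex sets of $Q_i'$ and $Q_j'$ are contained in $Q_i\cup Q_j$. Also $Q_i'$ meets $Q_j'$ at $v$, so $i$ is again the smallest index whose path meets another.
\item \emph{Vertex $v$.} The prefix of $Q_i'$ strictly before $v$ is the old prefix of $Q_i$, whose vertices lay on no other old path. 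I must check they also lie on no new path $Q_k'$ with $k\neq i$. The only new vertices that could appear on such paths are those of the old tail of $Q_i$ after $v$, now sitting on $Q_j'$. These cannot coincide with prefix vertices of $Q_i$, since $Q_i$ is a simple path in an acyclic graph. So $v$ is still the first vertex of $Q_i'$ lying on another path.
\item \emph{Index $j$.} The set of paths other than $Q_i'$ that pass through $v$ has the same indices as before: only $Q_j$ was modified among them, and $Q_j'$ still contains $v$. So $j$ is again the smallest such index.
\end{itemize}
Thus $\Phi$ is a sign-reversing involution, the non-disjoint terms cancel, and $\det(D)=|\mathcal{Q}|$.
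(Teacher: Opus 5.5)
The paper does not prove this lemma. It quotes it from Krattenthaler's survey, and your argument is the standard Gessel--Viennot sign-reversing involution proof, which is correct as written. The only details you leave implicit are that $j>i$ (forced by the minimality of $i$, since $Q_j$ meets $Q_i$ at $v$) and that the spliced walks $Q_i'$ and $Q_j'$ are genuine directed paths, which holds because $G$ is acyclic.
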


For every $i,j\in[n]$, it is known~{\cite[Theorem 10.3.1]{Kra}} that the number of lattice paths from $(2i-2,n-i)$ to $(n+j-1,2n+1-2j)$ that do not go above $y=x+n$ is $\binom{2n+2-i-j}{n-2i+j+1}-\binom{2n+2-i-j}{n+i+j}$. Thus, Theorem~\ref{thm:main} has the following immediate corollary. 
\begin{corollary}\label{cor:LGV}
For every positive integer $n$, let $\zeta=e^{2\pi i/(3n+3)}$. Let $D_n$ be the $n\times n$ matrix whose $(i,j)$-entry is $\binom{2n+2-i-j}{n-2i+j+1}-\binom{2n+2-i-j}{n+i+j}$ for every $i,j\in[n]$. Then, the number of $n$-tuples of non-intersecting lattice paths $(Q_1,\ldots,Q_n)$ that do not go above $y=x+n$, with $Q_i$ going from $(2i-2,n-i)$ to $(n+i-1,2n+1-2i)$ for each $i\in[n]$ is 
\[\det(D_n)=T_n=\prod_{\substack{1\leq a<b\leq 3n+2\\(a,b)\not=(n+1,2n+2)}}\left|1+\zeta^a+\zeta^b\right|^{1/3}.\]
\end{corollary}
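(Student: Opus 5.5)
This corollary should follow directly from the Lindstr\"om-Gessel-Viennot Lemma (Lemma~\ref{lemma:LGV}) combined with Theorem~\ref{thm:main}. The plan has three steps.

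\textbf{Step 1: the bijection to lattice paths.} First I would make precise the bijection described before the statement. It should send lozenge tilings of $\mathcal{T}_n$ to $n$-tuples of non-intersecting lattice paths $(Q_1,\ldots,Q_n)$ with $Q_i$ running from $(2i-2,n-i)$ to $(n+i-1,2n+1-2i)$ and staying weakly below $y=x+n$.

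Concretely, I would follow the $n$ families of lozenges crossing the bottom-left edges (the ``bug paths''). Each bottom-left boundary edge starts a chain of lozenges. Each such chain is forced to continue until it exits through a top-right edge. Chains from distinct starting edges never share a lozenge.

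Applying an affine map to the triangular lattice turns each chain into a lattice path with unit east and north steps. I would then check that the boundary of $\mathcal{T}_n$ becomes exactly the constraint $y\le x+n$, and that the start and end points become the stated coordinates.

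Injectivity and surjectivity are the standard facts: the tiling is determined by the positions of the two lozenge types crossed by the paths, and every non-intersecting family fills back to a tiling. This gives $T_n=|\mathcal{Q}|$.

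\textbf{Step 2: applying Lemma~\ref{lemma:LGV}.} Take $G$ to be the directed lattice graph on integer points weakly below $y=x+n$, with east and north edges. It is finite once restricted to the bounding box, and it is acyclic.

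I need the non-permutability hypothesis of the lemma. The starts $A_i=(2i-2,n-i)$ move strictly down-right as $i$ increases, and so do the ends $B_i=(n+i-1,2n+1-2i)$. By the usual planarity argument, any vertex-disjoint family must therefore connect $A_i$ to $B_i$, so no permutation other than the identity is realised.

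Next, the entries of $D$. The number of paths from $A_i$ to $B_j$ staying weakly below $y=x+n$ is given by the reflection principle, quoted from \cite[Theorem 10.3.1]{Kra}, as $\binom{2n+2-i-j}{n-2i+j+1}-\binom{2n+2-i-j}{n+i+j}$. I would quickly confirm this. The total number of steps is $(n+j-1-2i+2)+(2n+1-2j-n+i)=2n+2-i-j$, and the number of east steps is $n+j-2i+1$. Reflecting $A_i$ in the line $y=x+n+1$ gives $(-i-1,n+2i-2)$, which yields the subtracted term with $n+i+j$ east steps.

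\textbf{Step 3: conclusion.} Lemma~\ref{lemma:LGV} gives $\det(D_n)=|\mathcal{Q}|=T_n$, and Theorem~\ref{thm:main} supplies the product formula.

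\textbf{Main obstacle.} The only delicate part is Step 1: verifying the exact coordinates of the endpoints and the barrier line under the chosen affine transformation. I would do this by checking $n=1,2$ against Figure~\ref{fig:parLGV} and Table~\ref{table:1}. For $n=1$, $D_1=\binom{2}{1}-\binom{2}{3}=2=T_1$, which matches.
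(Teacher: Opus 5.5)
Your proposal follows the same route as the paper: the tiling-to-lattice-path bijection, Lemma~\ref{lemma:LGV} with entries from \cite[Theorem 10.3.1]{Kra}, and then Theorem~\ref{thm:main}. Your planarity check of the non-permutability hypothesis also fills in a detail the paper leaves implicit. One small arithmetic slip: reflecting $A_i=(2i-2,n-i)$ in $y=x+n+1$ gives $(-i-1,n+2i-1)$, not $(-i-1,n+2i-2)$, and with the corrected point the total of $2n+2-i-j$ steps and the $n+i+j$ east steps come out as you state.
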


It would be interesting to know whether the determinant of the matrix $D_n$ can be evaluated in a more direct way, which would give an alternative proof of Theorem~\ref{thm:main}. 

\section{Main Result}
In this section, we prove our main result, Theorem~\ref{thm:main}. We begin by providing a proof sketch.
\subsection{Proof sketch}\label{sec:sketch}
The first step in our proof of Theorem~\ref{thm:main} is to transform the perfect matching enumeration problem into a determinant evaluation problem. To motivate the main idea, we start with the following lemma that turns counting perfect matchings in a balanced bipartite graph into evaluating the permanent of the bipartite adjacency matrix. The proof is immediate after unravelling the definitions.
\begin{lemma}\label{lemma:perm}
Let $G$ be a balanced bipartite graph with bipartition classes $\{a_1,\ldots,a_n\}$ and $\{b_1,\ldots,b_n\}$. Let $M$ be the $n\times n$ bipartite adjacency matrix of $G$, so that for any $i,j\in[n]$, $M(i,j)=1$ if $a_ib_j\in E(G)$, and $M(i,j)=0$ otherwise. Then, the number of perfect matchings in $G$ is given by the permanent of $M$, which is
\[\per(M)=\sum_{\sigma\in S_n}\prod_{i=1}^nM(i,\sigma(i)).\]
\end{lemma}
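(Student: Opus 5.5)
We argue by unravelling the definitions. A perfect matching of $G$ is a set of $n$ edges covering each vertex exactly once. Every vertex $a_i$ is matched to exactly one $b_j$, and distinct $a_i$ are matched to distinct $b_j$. So each perfect matching $P$ determines a unique permutation $\sigma_P\in S_n$, defined by $\sigma_P(i)=j$ if and only if $a_ib_j\in P$. Conversely, a permutation $\sigma$ defines the candidate edge set $\{a_ib_{\sigma(i)}:i\in[n]\}$. This set covers every vertex exactly once, so it is a perfect matching precisely when all of its pairs are edges of $G$.

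Next we evaluate each term of the permanent. For fixed $\sigma$, every factor $M(i,\sigma(i))$ lies in $\{0,1\}$, so
\[\prod_{i=1}^nM(i,\sigma(i))=\begin{cases}1&\text{if } a_ib_{\sigma(i)}\in E(G)\text{ for all } i\in[n],\\0&\text{otherwise.}\end{cases}\]
By the previous paragraph, this is the indicator that $\sigma$ arises from a perfect matching.

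Finally, the map $P\mapsto\sigma_P$ is injective, since $P$ is recovered as $\{a_ib_{\sigma_P(i)}\}$. Its image is exactly the set of $\sigma$ whose product equals $1$. Summing over $S_n$ therefore counts perfect matchings, which gives $\per(M)$ as claimed.

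There is no genuine obstacle here. The only point needing care is checking that the correspondence is bijective, in particular that a permutation whose pairs are all edges always gives a valid matching. This holds because a permutation uses each $b_j$ exactly once.
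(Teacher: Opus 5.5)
Your proof is correct and takes the same approach as the paper, which only remarks that the lemma is immediate after unravelling the definitions. Your bijection between perfect matchings and the permutations $\sigma$ with $\prod_{i=1}^n M(i,\sigma(i))=1$ is exactly that unravelling, written out in full.
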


While Lemma~\ref{lemma:perm} is simple to state and prove, it has the drawback that not many methods exist for evaluating permanents. In contrast, there are many ways to evaluate determinants, but the additional $\sgn(\sigma)$ term in the definition of determinants prevents a direct translation of Lemma~\ref{lemma:perm} to determinants in general. To combat this, we use an idea originating from the aforementioned work of Kasteleyn~\cite{K} and Temperley and Fisher~\cite{TF}, which is to replace the 1 entries in $M$ with some suitable edge weights with modulus 1, so that they cancel out the effects of the $\sgn(\sigma)$ terms. The following version for planar bipartite graphs in the lecture notes of Kenyon~\cite{Ken} suffices for our purpose, though there is a more general version of Kasteleyn~\cite{K2} for non-bipartite planar graphs as well.
\begin{lemma}[{\cite[Theorem 2]{Ken}}]\label{lemma:weight}
Let $G$ be a planar bipartite graph with bipartition classes $\{a_1,\ldots,a_n\}$ and $\{b_1,\ldots,b_n\}$. Suppose $w:[n]^2\to\{0,\pm1\}$ is a weight function such that $w(i,j)\not=0$ if and only if $a_ib_j$ is an edge in $G$. Moreover, suppose that every bounded face of $G$ with size divisible by 4 contains an odd number of edges with weight $-1$, while every bounded face with size not divisible by 4 contains an even number of edges with weight $-1$. Then, the number of perfect matchings in $G$ is equal to $|\det(M)|$ for the matrix $M$ with $M(i,j)=w(i,j)$ for every $i,j\in[n]$.
\end{lemma}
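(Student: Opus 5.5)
The plan is the classical Kasteleyn sign argument: show that every nonzero term in the expansion of $\det(M)$ has the same sign. Then $|\det(M)|$ equals the number of nonzero terms, and Lemma~\ref{lemma:perm} identifies that number with the number of perfect matchings.

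\textbf{Step 1 (terms versus matchings).} Expand $\det(M)=\sum_{\sigma\in S_n}\sgn(\sigma)\prod_i w(i,\sigma(i))$. A term is nonzero exactly when $\{a_ib_{\sigma(i)}\}$ is a perfect matching of $G$. In that case the term equals $\pm1$, because all weights are $\pm1$. It therefore suffices to show that for any two perfect matchings $\sigma_1,\sigma_2$ the terms agree, i.e.
\[\sgn(\sigma_2\sigma_1^{-1})\prod_i w(i,\sigma_1(i))\,w(i,\sigma_2(i))=1.\]
Here I used $w^{-1}=w$ for entries $\pm1$.

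\textbf{Step 2 (reduce to cycles).} The symmetric difference of the two matchings is a disjoint union of even cycles $C_1,\dots,C_r$ in $G$; edges common to both matchings contribute $w^2=1$. A cycle $C$ of length $2k$ corresponds to a $k$-cycle of the permutation $\sigma_2\sigma_1^{-1}$, so it contributes sign $(-1)^{k-1}$. It contributes weight $(-1)^{N(C)}$, where $N(C)$ is the number of edges of $C$ with weight $-1$. So it suffices to prove, for every such cycle,
\[N(C)\equiv k-1 \pmod 2.\]

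\textbf{Step 3 (Euler count; the main step).} Fix $C$ of length $2k$ and consider the region it bounds in the plane. That region is a union of bounded faces $f$ of $G$, with sizes $2\ell_f$ since $G$ is bipartite. The face hypothesis says exactly that $f$ contains $\equiv \ell_f-1 \pmod 2$ edges of weight $-1$: if $4\mid 2\ell_f$ then $\ell_f$ is even and the count is odd, and otherwise it is even. Summing over the enclosed faces, every interior edge is counted twice, so $N(C)\equiv\sum_f(\ell_f-1)\pmod 2$.

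Let $V_{\mathrm{in}}$, $E_{\mathrm{in}}$ and $F$ be the numbers of vertices strictly inside $C$, edges strictly inside $C$, and enclosed faces. Counting edge-face incidences gives $\sum_f 2\ell_f=2E_{\mathrm{in}}+2k$. Euler's formula for the disc gives $(V_{\mathrm{in}}+2k)-(E_{\mathrm{in}}+2k)+F=1$. Combining these,
\[\sum_f(\ell_f-1)=E_{\mathrm{in}}+k-F=V_{\mathrm{in}}+k-1.\]

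\textbf{Step 4 (parity of interior vertices).} It remains to see that $V_{\mathrm{in}}$ is even. Every vertex of $C$ is matched by $\sigma_1$ to a neighbour along $C$. Hence a vertex strictly inside $C$ is matched by $\sigma_1$ to a vertex not on $C$. By planarity that matching edge cannot cross $C$, so its other endpoint is also strictly inside. Thus $\sigma_1$ restricts to a perfect matching of the interior vertices, and $V_{\mathrm{in}}$ is even. This gives $N(C)\equiv k-1\pmod 2$, which completes the argument.

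I expect Step 3 to be the main obstacle: one must get the bookkeeping of the Euler characteristic of the enclosed disc right, and justify that the region inside $C$ is tiled by bounded faces of $G$. This uses the Jordan curve theorem for the planar embedding.
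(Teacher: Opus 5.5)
Your argument is the standard Kasteleyn parity argument. It is essentially the proof in Kenyon's notes that the paper cites; the paper itself gives no proof of this lemma. In that argument, a cycle of length $2k$ enclosing $V_{\mathrm{in}}$ vertices carries $\equiv k-1+V_{\mathrm{in}}$ edges of weight $-1$, and a cycle in the symmetric difference of two perfect matchings encloses an even number of vertices. Steps 1, 2 and 4 are fine as written.

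The point that needs repair is in Step 3. The identity $(V_{\mathrm{in}}+2k)-(E_{\mathrm{in}}+2k)+F=1$ is Euler's formula for a \emph{connected} plane graph, and you never check that $C$ together with everything inside it is connected. If that part has $c$ components, the same bookkeeping gives $N(C)\equiv V_{\mathrm{in}}+k-c$, which has the wrong parity when $c$ is even.

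This is a gap in the statement as written here, not only in your proof. Draw an edge $a_3b_3$ inside the $4$-cycle $a_1b_1a_2b_2$ and give every edge weight $+1$. The unique bounded face has boundary length $6$ (or $5$ distinct edges), neither divisible by $4$, so the hypothesis is satisfied with no $-1$ edges. Yet $\det(M)=0$, while there are $2$ perfect matchings.

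The fix is to assume $G$ connected, which costs nothing for $\mathcal{G}_n$. Then Step 3 goes through: a path in $G$ from an interior vertex to $C$ cannot leave the closed disc before first reaching $C$, so the graph inside $C$ is connected.

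Similarly, face sizes must be read as boundary-walk lengths, with an edge having the same face on both sides counted twice. With distinct-edge counts the lemma fails even for connected graphs. For example, hang the path $a_1b_3a_3$ inside the $4$-cycle with all weights $+1$: there are $6$ distinct edges and no $-1$ edges, $\det(M)=0$, and there are $2$ matchings. Your Step 3 already uses the boundary-walk convention implicitly, since it is what makes every face size even and every interior edge counted twice.
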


Since $\mathcal{G}_n$ is a bipartite planar graph whose bounded faces are all hexagons, we have the following immediate corollary, as Lemma~\ref{lemma:weight} implies that simply giving each edge weight 1 works. 
\begin{corollary}\label{cor:det=}
Let $M_n$ be a bipartite adjacency matrix of $\mathcal{G}_n$ with the rows indexed by one bipartition class of $\mathcal{G}_n$ and the columns indexed by the other. Then, the number of perfect matchings in $\mathcal{G}_n$ is $|\det(M_n)|$.
\end{corollary}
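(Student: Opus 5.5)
The statement to be proved is Corollary~\ref{cor:det=}. I will derive it directly from Lemma~\ref{lemma:weight} with the constant weight function $w(i,j)=1$ whenever $a_ib_j\in E(\mathcal{G}_n)$ and $w(i,j)=0$ otherwise. With this choice, the matrix in Lemma~\ref{lemma:weight} is exactly the bipartite adjacency matrix $M_n$. No edge has weight $-1$, so every bounded face contains zero edges of weight $-1$, which is an even number. The parity condition of the lemma therefore holds as long as no bounded face of $\mathcal{G}_n$ has size divisible by $4$.

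The proof consists of checking the hypotheses in order.

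\emph{Planarity.} $\mathcal{G}_n$ is the dual graph of the triangulated region $\mathcal{T}_n$. Placing each vertex at the centre of its triangle and drawing each edge as a segment across the shared side gives a planar embedding.

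\emph{Bipartiteness and balance.} The triangles of the triangular lattice split into up-pointing and down-pointing ones, and two triangles sharing a side always have opposite orientation. So $\mathcal{G}_n$ is bipartite with these two classes. If the two classes have different sizes, there are no perfect matchings, and since $M_n$ is then non-square the statement holds in the degenerate sense; otherwise we are in the setting of Lemma~\ref{lemma:weight}. A more honest route is to note that $\mathcal{T}_n$ has a tiling (Table~\ref{table:1}, or any explicit tiling), so the classes must be balanced.

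\emph{Face sizes.} The bounded faces of $\mathcal{G}_n$ correspond to interior lattice vertices of $\mathcal{T}_n$. Such a vertex is surrounded by exactly six triangles of the region, and their centres form a hexagonal face. Since $6\not\equiv 0\pmod 4$, the all-ones weighting satisfies the parity condition.

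Applying Lemma~\ref{lemma:weight} then gives that the number of perfect matchings is $|\det(M_n)|$. Changing the order of the rows or of the columns changes the determinant only by a sign, so the result holds for any ordering of $M_n$.

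The main obstacle is the face-size claim. It requires that every bounded face of the embedded dual graph arises from a lattice vertex all six of whose surrounding triangles lie in $\mathcal{T}_n$. Equivalently, $\mathcal{T}_n$ must have no holes and no interior vertex with only part of its surrounding triangles present, since that would produce a larger face. I would check this from the construction of $\mathcal{T}_n$ as a union of hexagons in Figure~\ref{fig:intro}. In $\mathcal{G}_n$ these appear as the drawn hexagonal cells, and the hexagons are glued edge-to-edge in a simply connected triangular arrangement. That arrangement forces every bounded face of $\mathcal{G}_n$ to be one of these hexagons.
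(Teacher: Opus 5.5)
Your proposal is correct and is exactly the paper's argument: Lemma~\ref{lemma:weight} with the all-ones weighting applies because $\mathcal{G}_n$ is planar and bipartite and all its bounded faces are hexagons, with $6\not\equiv 0\pmod 4$. Two small cleanups. First, the bounded faces of $\mathcal{G}_n$ are not only the hexagons of $\mathcal{T}_n$; there are also faces around the lattice points where three of those hexagons meet, which your interior-vertex argument already covers. Second, balance is immediate because each hexagon of $\mathcal{T}_n$ contains three triangles of each orientation, so the remark about a non-square $M_n$ can be dropped.
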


To evaluate the determinant of $M_n$, we use a strategy employed by Kenyon, Propp, and Wilson in~{\cite[Section 6.9]{KPW}} to find the number of perfect matchings of a similar graph with mostly hexagonal faces.

We place $\mathcal{G}_n$ on the complex plane as shown in Figure~\ref{fig:coordinate}, so that the vertices all correspond to points in $\mathbb{Z}[\omega]$. Vertices in the two bipartition classes of $\mathcal{G}_n$, say $B$ and $W$, are coloured with black and white, and referred to as black and white vertices, respectively. Up to translation, this allows us to label each vertex at $i+j\omega+k\omega^2$ with the coordinate $(i,j,k)$. The benefit of this labelling is that the edge adjacencies in $\mathcal{G}_n$ can now be described in a simple and symmetric way. For example, for each black vertex with coordinate $(i,j,k)$, its neighbours are those with coordinates $(i+1,j,k), (i,j+1,k), (i,j,k+1)$ that are in $\mathcal{G}_n$. 

\begin{figure}[h]
    \centering
    \newcommand\xeh{}
\def\xeh[#1](#2:#3){
\draw[#1] ($(#2,{#3*sqrt(3)})$) -- ($(#2+0.5,{#3*sqrt(3)})$);
\draw[#1] ($(#2,{(#3+0.5)*sqrt(3)})$) -- ($(#2+0.5,{(#3+0.5)*sqrt(3)})$);
\draw[#1] ($(#2,{#3*sqrt(3)})$) -- ($(#2-0.25,{(#3+0.25)*sqrt(3)})$);
\draw[#1] ($(#2,{(#3+0.5)*sqrt(3)})$) -- ($(#2-0.25,{(#3+0.25)*sqrt(3)})$);
\draw[#1] ($(#2+0.5,{#3*sqrt(3)})$) -- ($(#2+0.75,{(#3+0.25)*sqrt(3)})$);
\draw[#1] ($(#2+0.5,{(#3+0.5)*sqrt(3)})$) -- ($(#2+0.75,{(#3+0.25)*sqrt(3)})$);
}

\newcommand\xehp{}
\def\xehp[#1](#2:#3){
\draw[#1] ($(#2,{#3*sqrt(3)})$) -- ($(#2+0.5,{#3*sqrt(3)})$);
\draw[#1] ($(#2,{(#3+0.5)*sqrt(3)})$) -- ($(#2+0.5,{(#3+0.5)*sqrt(3)})$);
\draw[#1] ($(#2,{#3*sqrt(3)})$) -- ($(#2-0.25,{(#3+0.25)*sqrt(3)})$);
\draw[#1] ($(#2,{(#3+0.5)*sqrt(3)})$) -- ($(#2-0.25,{(#3+0.25)*sqrt(3)})$);
\draw[#1] ($(#2+0.5,{#3*sqrt(3)})$) -- ($(#2+0.75,{(#3+0.25)*sqrt(3)})$);
\draw[#1] ($(#2+0.5,{(#3+0.5)*sqrt(3)})$) -- ($(#2+0.75,{(#3+0.25)*sqrt(3)})$);

\node[draw=black,inner sep=0.4ex,circle,fill=black] at ($(#2,{#3*sqrt(3)})$) {};
\node[draw=black,inner sep=0.4ex,circle,fill=black] at ($(#2,{(#3+0.5)*sqrt(3)})$) {};
\node[draw=black,inner sep=0.4ex,circle,fill=black] at ($(#2+0.75,{(#3+0.25)*sqrt(3)})$) {};

\pgfmathtruncatemacro{\a}{#2*4/3+4}
\pgfmathtruncatemacro{\b}{-#2*2/3+#3*2+2}
\pgfmathtruncatemacro{\c}{-#2*2/3-#3*2-5}
\pgfmathtruncatemacro{\d}{#2*4/3+5}
\pgfmathtruncatemacro{\e}{-#2*2/3+#3*2+2}
\pgfmathtruncatemacro{\f}{-#2*2/3-#3*2-6}
\pgfmathtruncatemacro{\g}{#2*4/3+4}
\pgfmathtruncatemacro{\h}{-#2*2/3+#3*2+3}
\pgfmathtruncatemacro{\i}{-#2*2/3-#3*2-6}
\node[draw=none, fill=none, below] at ($(#2,{#3*sqrt(3)})$) {\tiny (\a,\b,\c)};
\node[draw=none, fill=none, below] at ($(#2,{(#3+0.5)*sqrt(3)})$) {\tiny (\g,\h,\i)};
\node[draw=none, fill=none, below] at ($(#2+0.75,{(#3+0.25)*sqrt(3)})$) {\tiny (\d,\e,\f)};

\node[draw=black,inner sep=0.4ex,circle,fill=white] at ($(#2+0.5,{#3*sqrt(3)})$) {};
\node[draw=black,inner sep=0.4ex,circle,fill=white] at ($(#2+0.5,{(#3+0.5)*sqrt(3)})$) {};
\node[draw=black,inner sep=0.4ex,circle,fill=white] at ($(#2-0.25,{(#3+0.25)*sqrt(3)})$) {};

\pgfmathtruncatemacro{\a}{#2*4/3+5}
\pgfmathtruncatemacro{\b}{-#2*2/3+#3*2+2}
\pgfmathtruncatemacro{\c}{-#2*2/3-#3*2-5}
\pgfmathtruncatemacro{\d}{#2*4/3+4}
\pgfmathtruncatemacro{\e}{-#2*2/3+#3*2+3}
\pgfmathtruncatemacro{\f}{-#2*2/3-#3*2-5}
\pgfmathtruncatemacro{\g}{#2*4/3+5}
\pgfmathtruncatemacro{\h}{-#2*2/3+#3*2+3}
\pgfmathtruncatemacro{\i}{-#2*2/3-#3*2-6}
\node[draw=none, fill=none, below] at ($(#2+0.5,{#3*sqrt(3)})$) {\tiny (\a,\b,\c)};
\node[draw=none, fill=none, below] at ($(#2+0.5,{(#3+0.5)*sqrt(3)})$) {\tiny (\g,\h,\i)};
\node[draw=none, fill=none, below] at ($(#2-0.25,{(#3+0.25)*sqrt(3)})$) {\tiny (\d,\e,\f)};
}

\begin{tikzpicture}[scale=1.75]

\xeh[densely dotted](0:-0.5);
\xeh[densely dotted](-0.75:-1.25);
\xeh[densely dotted](0.75:-1.25);

\xehp[densely dotted](0:0);
\xehp[densely dotted](-0.75:-0.75);
\xehp[densely dotted](0.75:-0.75);
\xehp[densely dotted](-1.5:-1.5);
\xehp[densely dotted](0:-1.5);
\xehp[densely dotted](1.5:-1.5);

\draw[densely dotted] ($(-2.75,{-1.75*sqrt(3)})$) -- ($(3.25,{-1.75*sqrt(3)})$) -- ($(0.25,{1.25*sqrt(3)})$) -- ($(-2.75,{-1.75*sqrt(3)})$);

\node[draw=black,inner sep=0.4ex,circle,fill=red] at ($(1.25,{0.25*sqrt(3)})$) {};
\node[draw=black,inner sep=0.4ex,circle,fill=red] at ($(2,{-0.5*sqrt(3)})$) {};
\node[draw=black,inner sep=0.4ex,circle,fill=red] at ($(2.75,{-1.25*sqrt(3)})$) {};
\node[draw=none,fill=none,below] at ($(1.25,{0.25*sqrt(3)})$) {\tiny (6,2,-6)};
\node[draw=none,fill=none,below] at ($(2,{-0.5*sqrt(3)})$) {\tiny (7,0,-5)};
\node[draw=none,fill=none,below] at ($(2.75,{-1.25*sqrt(3)})$) {\tiny (8,-2,-4)};

\node[draw=black,inner sep=0.4ex,circle,fill=red] at ($(-0.25,{0.75*sqrt(3)})$) {};
\node[draw=black,inner sep=0.4ex,circle,fill=red] at ($(-1,{0*sqrt(3)})$) {};
\node[draw=black,inner sep=0.4ex,circle,fill=red] at ($(-1.75,{-0.75*sqrt(3)})$) {};
\node[draw=none,fill=none,below] at ($(-0.25,{0.75*sqrt(3)})$) {\tiny (4,4,-6)};
\node[draw=none,fill=none,below] at ($(-1,{0*sqrt(3)})$) {\tiny (3,3,-4)};
\node[draw=none,fill=none,below] at ($(-1.75,{-0.75*sqrt(3)})$) {\tiny (2,2,-2)};

\node[draw=black,inner sep=0.4ex,circle,fill=red] at ($(-1.75,{-1.75*sqrt(3)})$) {};
\node[draw=black,inner sep=0.4ex,circle,fill=red] at ($(-0.25,{-1.75*sqrt(3)})$) {};
\node[draw=black,inner sep=0.4ex,circle,fill=red] at ($(1.25,{-1.75*sqrt(3)})$) {};
\node[draw=none,fill=none,below] at ($(-1.75,{-1.75*sqrt(3)})$) {\tiny (2,0,0)};
\node[draw=none,fill=none,below] at ($(-0.25,{-1.75*sqrt(3)})$) {\tiny (4,-1,-1)};
\node[draw=none,fill=none,below] at ($(1.25,{-1.75*sqrt(3)})$) {\tiny (6,-2,-2)};

\node[draw=none,fill=none] at ($(0.25,{-2*sqrt(3)})$) {$L_1:j=k$};
\node[draw=none,fill=none] at ($(2.75,{-0.25*sqrt(3)})$) {$L_2:i-k=12$};
\node[draw=none,fill=none] at ($(-2,{-0.25*sqrt(3)})$) {$L_3:i=j$};

\node[draw=black,inner sep=0.2ex,circle,fill=black] at ($(-2.75,{-1.75*sqrt(3)})$) {};
\node[draw=none,fill=none,below right = 0 cm and -0.2 cm] at ($(-2.75,{-1.75*sqrt(3)})$) {\tiny (0,0,0)};
\draw[->] ($(-2.75,{-1.75*sqrt(3)})$) -- ($(-2.25,{-1.75*sqrt(3)})$);
\node[draw=none,fill=none,below] at ($(-2.25,{-1.75*sqrt(3)})$) {\tiny 1};
\draw[->] ($(-2.75,{-1.75*sqrt(3)})$) -- ($(-3,{-1.5*sqrt(3)})$);
\node[draw=none,fill=none,above] at ($(-3,{-1.5*sqrt(3)})$) {\tiny $\omega$};
\draw[->] ($(-2.75,{-1.75*sqrt(3)})$) -- ($(-3,{-2*sqrt(3)})$);
\node[draw=none,fill=none,below] at ($(-3,{-2*sqrt(3)})$) {\tiny $\omega^2$};
\end{tikzpicture}
    \caption{The graph $\mathcal{G}_3$ placed on the complex plane. Each vertex is labelled with $(i,j,k)$ so that $i+j\omega+k\omega^2$ is the complex number corresponding to its location. Additionally, each black vertex satisfies $i+j+k=1$, and each white vertex satisfies $i+j+k=2$. Each red point would have been the neighbour of a black vertex, but is not because it lies on one of the boundary lines $L_1$, $L_2$, or $L_3$, and is not in $\mathcal{G}_n$.}\label{fig:coordinate}
\end{figure}

Then, we find two families of ``eigenvectors" $F^{\alpha,\beta,\gamma}_B\in\mathbb{C}^B$ and $F^{\alpha,\beta,\gamma}_W\in\mathbb{C}^W$ indexed by $(3n+3)$-th roots of unity $(\alpha,\beta,\gamma)$, such that $M_nF^{\alpha,\beta,\gamma}_W=\lambda^{\alpha,\beta,\gamma}F^{\alpha,\beta,\gamma}_B$ for some $\lambda^{\alpha,\beta,\gamma}\in\mathbb{C}$. The simple choices of $f^{\alpha,\beta,\gamma}_B(i,j,k)=\alpha^i\beta^j\gamma^k$ and $f^{\alpha,\beta,\gamma}_W(i,j,k)=\alpha^i\beta^j\gamma^k$ almost work, except for some irregularities on the boundary. To fix this, we let $F^{\alpha,\beta,\gamma}_W$ be a signed sum of the permuted versions of $f^{\alpha,\beta,\gamma}_W$, so that it vanishes on the boundary, and similarly for $F^{\alpha,\beta,\gamma}_B$. The corresponding ``eigenvalues" turn out to be $\lambda^{\alpha,\beta,\gamma}=\alpha+\beta+\gamma$. Of course, these are not actually eigenvectors and eigenvalues of the matrix $M_n$, as $F^{\alpha,\beta,\gamma}_B$ and $F^{\alpha,\beta,\gamma}_W$ are not the same vectors, but by doing this we have essentially diagonalised $M_n$. 

To make this diagonalisation precise, we show that if the triples of roots of unity $(\alpha,\beta,\gamma)$ are considered only up to scalar multiplication and permutation symmetry, and after excluding some degenerate triples, the remaining vectors $F^{\alpha,\beta,\gamma}_B$ and $F^{\alpha,\beta,\gamma}_W$ form bases of $\mathbb{C}^B$ and $\mathbb{C}^W$, respectively. Proving linear independence is surprisingly delicate, and requires further expanding the symmetry and enlarging the set of points considered, as depicted later in Figure~\ref{fig:symmetry}, so that the inner products of these vectors can be more easily computed. Assuming this, it follows that when represented in these new bases, the matrix $M_n$ becomes diagonal with diagonal entries $\alpha+\beta+\gamma$, for some suitable collection of triples $(\alpha,\beta,\gamma)$. The desired formula for $|\det(M_n)|$ and thus for $T_n$ follows after some additional algebraic manipulations.

\subsection{Proof of Theorem~\ref{thm:main}}
We now prove Theorem~\ref{thm:main} following the sketch in Section~\ref{sec:sketch}.
\begin{proof}[Proof of Theorem~\ref{thm:main}] \mbox{}

\medskip

\textbf{I. Setup and reduction to determinant evaluation}

Place the graph $\mathcal{G}_n$ on the complex plane as in Figure~\ref{fig:coordinate}. Let $\omega=e^{2\pi i/3}$, and recall that $1+\omega+\omega^2=0$. Then, each vertex corresponds to a complex number in $\mathbb{Z}[\omega]$, which can be represented non-uniquely as $i+j\omega+k\omega^2$ for some $i,j,k\in\mathbb{Z}$. Note that if $i,j,k,i',j',k'\in\mathbb{Z}$, then $i+j\omega+k\omega^2=i'+j'\omega+k'\omega^2$ if and only if $(i,j,k)=(i',j',k')+t(1,1,1)$ for some $t\in\mathbb{Z}$. Thus, we can label every point in $\mathbb{Z}[\omega]$, and in particular each black and white vertex in $\mathcal{G}_n$, uniquely as $(i,j,k)\in\mathbb{Z}^3$ with $i+j+k\in\{0,1,2\}$. 

The three boundary lines are 
\[L_1=\{(i,j,k):j=k\},\;\;L_2=\{(i,j,k):i-k=3n+3\},\;\;L_3=\{(i,j,k):i=j\}.\]
Each black vertex $(i,j,k)$ satisfies $i+j+k=1$, and is adjacent to white vertices $(i+1,j,k)$, $(i,j+1,k)$, and $(i,j,k+1)$, except possibly one which lies on the boundary. Each white vertex $(i,j,k)$ satisfies $i+j+k=2$, and is adjacent to $(i-1,j,k)$, $(i,j-1,k)$, and $(i,j,k-1)$, except possibly one which lies on the boundary.

Let $M_n$ be the bipartite adjacency matrix of $\mathcal{G}_n$, whose rows are indexed by the set $B$ of black vertices, whose columns are indexed by the set $W$ of white vertices, and $M_n((i,j,k),(i',j',k'))=1$ if $(i,j,k)(i',j',k')$ is an edge in $\mathcal{G}_n$, and 0 otherwise. Note that the dimension of $M_n$ is $3n(n+1)/2$. By Corollary~\ref{cor:det=}, the number of perfect matchings in $\mathcal{G}_n$ is equal to $|\det(M_n)|$.

\medskip

\textbf{II. Candidate vectors for diagonalisation}

For each triple of complex numbers $\alpha,\beta,\gamma$ satisfying $\alpha^{3n+3}=\beta^{3n+3}=\gamma^{3n+3}=1$, define a vector $f^{\alpha,\beta,\gamma}\in\mathbb{C}^{\mathbb{Z}^3}$ by $f^{\alpha,\beta,\gamma}(i,j,k)=\alpha^i\beta^j\gamma^k$. Denote the restrictions of $f^{\alpha,\beta,\gamma}$ to $B$ and $W$ by $f_B^{\alpha,\beta,\gamma}\in\mathbb{C}^B$ and $f_W^{\alpha,\beta,\gamma}\in\mathbb{C}^W$, respectively. Furthermore, define a vector $F^{\alpha,\beta,\gamma}\in\mathbb{C}^{\mathbb{Z}^3}$ as the following signed sum of the six superscript-permuted versions of $f^{\alpha,\beta,\gamma}$:
\[F^{\alpha,\beta,\gamma}=f^{\alpha,\beta,\gamma}-f^{\beta,\alpha,\gamma}-f^{\alpha,\gamma,\beta}-f^{\gamma,\beta,\alpha}+f^{\beta,\gamma,\alpha}+f^{\gamma,\alpha,\beta},\]
and denote its restriction to $B$ and $W$ by $F_B^{\alpha,\beta,\gamma}\in\mathbb{C}^B$ and $F_W^{\alpha,\beta,\gamma}\in\mathbb{C}^W$, respectively.

For every black vertex $(i,j,k)$, we have
\[f^{\alpha,\beta,\gamma}(i+1,j,k)+f^{\alpha,\beta,\gamma}(i,j+1,k)+f^{\alpha,\beta,\gamma}(i,j,k+1)=(\alpha+\beta+\gamma)\alpha^i\beta^j\gamma^k=(\alpha+\beta+\gamma)f_B^{\alpha,\beta,\gamma}(i,j,k),\]
and so by linearity, 
\[F^{\alpha,\beta,\gamma}(i+1,j,k)+F^{\alpha,\beta,\gamma}(i,j+1,k)+F^{\alpha,\beta,\gamma}(i,j,k+1)=(\alpha+\beta+\gamma)F_B^{\alpha,\beta,\gamma}(i,j,k).\]

For every $(i,j,k)\in L_1$, we have $j=k$, so from the definitions of $F^{\alpha,\beta,\gamma}$ and $f^{\alpha,\beta,\gamma}$,
\[F^{\alpha,\beta,\gamma}(i,j,k)=\alpha^i\beta^j\gamma^j-\beta^i\alpha^j\gamma^j-\alpha^i\gamma^j\beta^j-\gamma^i\beta^j\alpha^j+\beta^i\gamma^j\alpha^j+\gamma^i\alpha^j\beta^j=0.\]
Similarly,  $F^{\alpha,\beta,\gamma}$ vanishes on the boundary lines $L_2$ and $L_3$, where for $L_2$ we used that $\alpha^{3n+3}=\beta^{3n+3}=\gamma^{3n+3}$. More generally, $F^{\alpha,\beta,\gamma}$ vanishes whenever two of the coordinates differ by a multiple of $3n+3$.

\begin{claim}\label{claim:eigen}
For every $(i,j,k)\in B$, we have $(M_nF_W^{\alpha,\beta,\gamma})(i,j,k)=(\alpha+\beta+\gamma)F_B^{\alpha,\beta,\gamma}(i,j,k)$. Thus,
\[M_nF_W^{\alpha,\beta,\gamma}=(\alpha+\beta+\gamma)F_B^{\alpha,\beta,\gamma}.\]
\end{claim}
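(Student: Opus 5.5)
The plan is to compare, at a fixed black vertex $(i,j,k)\in B$, the row sum $(M_nF_W^{\alpha,\beta,\gamma})(i,j,k)$ with the full three-term sum
\[F^{\alpha,\beta,\gamma}(i+1,j,k)+F^{\alpha,\beta,\gamma}(i,j+1,k)+F^{\alpha,\beta,\gamma}(i,j,k+1),\]
which we already know equals $(\alpha+\beta+\gamma)F_B^{\alpha,\beta,\gamma}(i,j,k)$. By definition of $M_n$, the row sum is the sum of $F_W^{\alpha,\beta,\gamma}$ over those of the three points $(i+1,j,k),(i,j+1,k),(i,j,k+1)$ that are white vertices of $\mathcal{G}_n$. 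So it suffices to show that every one of these three points that is \emph{not} in $W$ has $F^{\alpha,\beta,\gamma}=0$. Once that holds, the two sums agree termwise, and the claim follows for every row, which gives the matrix identity.

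The key step is to justify that any neighbour $(i',j',k')$ of a black vertex which is missing from $\mathcal{G}_n$ lies on one of the boundary lines $L_1$, $L_2$, $L_3$. We have already shown that $F^{\alpha,\beta,\gamma}$ vanishes on these lines. To set this up, I would describe the region explicitly in coordinates. Take the normalisation of Figure~\ref{fig:coordinate}: the vertices of $\mathcal{G}_n$ are the points with $i+j+k\in\{1,2\}$ lying strictly inside the triangle bounded by $L_1,L_2,L_3$. Equivalently, after choosing the correct orientation of the inequalities (read off from the figure, e.g.\ $(2,0,0)$ sits on $L_1$ with interior side $j>k$), these are the points with $j>k$, $i>j$ and $i-k<3n+3$.

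Now a black vertex satisfies these three strict inequalities. A step to a neighbour increases exactly one of $i,j,k$ by $1$, so each of the integer differences $j-k$, $i-j$, $i-k$ changes by at most $1$. Hence a strict inequality can fail only by becoming an equality. For example, increasing $k$ can turn $j>k$ into $j=k$, or increasing $i$ can turn $i-k<3n+3$ into $i-k=3n+3$. Thus a neighbour outside the open triangle lies on $L_1\cup L_2\cup L_3$, exactly as the red points in Figure~\ref{fig:coordinate} illustrate. Since it has the correct value of $i+j+k$ (namely $2$), the only way for it not to be a white vertex is this boundary case.

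I expect the main obstacle to be the bookkeeping in this step. One has to confirm that $\mathcal{G}_n$ really is exactly the set of lattice points with $i+j+k\in\{1,2\}$ strictly inside the triangle, with no interior point of the right parity omitted. This has to be matched with the picture of $\mathcal{T}_n$, and the signs of the inequalities have to be fixed correctly. I would check this against the labelled example $\mathcal{G}_3$ and the count $|B|=|W|=3n(n+1)/2$. The identity itself is then purely the linearity computation already recorded in the text.
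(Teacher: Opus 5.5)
Your proposal is correct and follows the paper's argument: the row sum equals the full three-term sum because every omitted neighbour lies on $L_1$, $L_2$ or $L_3$, where $F^{\alpha,\beta,\gamma}$ vanishes. Your inequality argument makes explicit the boundary fact that the paper takes directly from its setup. It uses the description $i>j>k>i-3n-3$ (which matches the paper's later parametrisation of $B$) together with the observation that a single step changes each difference by at most one. It also handles all missing neighbours uniformly instead of case by case.
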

\begin{proof}[Proof of Claim~\ref{claim:eigen}]
First, let $(i,j,k)$ be a black vertex with three neighbours in $\mathcal{G}_n$. These three neighbours are $(i+1,j,k),(i,j+1,k),(i,j,k+1)\in W$, and so 
\begin{align*}
(M_nF_W^{\alpha,\beta,\gamma})(i,j,k)&=F_W^{\alpha,\beta,\gamma}(i+1,j,k)+F_W^{\alpha,\beta,\gamma}(i,j+1,k)+F_W^{\alpha,\beta,\gamma}(i,j,k+1)\\
&=(\alpha+\beta+\gamma)F_B^{\alpha,\beta,\gamma}(i,j,k).
\end{align*}

Now let $(i,j,k)$ be a black vertex such that $(i,j,k+1)$ lies on the boundary $L_1$. From above, $F^{\alpha,\beta,\gamma}$ vanishes at $(i,j,k+1)$, so
\begin{align*}
(M_nF_W^{\alpha,\beta,\gamma})(i,j,k)&=F_W^{\alpha,\beta,\gamma}(i+1,j,k)+F_W^{\alpha,\beta,\gamma}(i,j+1,k)\\
&=F^{\alpha,\beta,\gamma}(i+1,j,k)+F^{\alpha,\beta,\gamma}(i,j+1,k)+F^{\alpha,\beta,\gamma}(i,j,k+1)\\
&=(\alpha+\beta+\gamma)F_B^{\alpha,\beta,\gamma}(i,j,k).
\end{align*}
The cases when $(i,j,k)$ is a black vertex such that $(i+1,j,k)$ lies on the boundary $L_2$, or $(i,j+1,k)$ lies on the boundary $L_3$ follow similarly, using that $F^{\alpha,\beta,\gamma}$ vanishes on $L_2$ and $L_3$. This proves the claim.
\renewcommand{\qedsymbol}{$\boxdot$}
\end{proof}
\renewcommand{\qedsymbol}{$\square$}

\textbf{III. Remove redundancy and degeneracy}

Note that for any permutation $\sigma$ of $\{\alpha,\beta,\gamma\}$, $F^{\sigma(\alpha),\sigma(\beta),\sigma(\gamma)}=\textup{sgn}(\sigma)F^{\alpha,\beta,\gamma}$. If $(\alpha',\beta',\gamma')=\delta(\alpha,\beta,\gamma)$ for some $(3n+3)$-th root of unity $\delta$, then $F_B^{\alpha',\beta',\gamma'}=\delta F_B^{\alpha,\beta,\gamma}$, using that every $(i,j,k)\in B$ satisfies $i+j+k=1$, and similarly $F_W^{\alpha',\beta',\gamma'}=\delta^2 F_W^{\alpha,\beta,\gamma}$. Also, if $\alpha,\beta,\gamma$ are not pairwise distinct, then $F^{\alpha,\beta,\gamma}$ is the zero vector. Moreover, if $\{\alpha,\beta,\gamma\}=\{1,\omega,\omega^2\}$, then for every $(i,j,k)\in B$, using $i+j+k=1$, we have
\begin{align*}
F_B^{\alpha,\beta,\gamma}(i,j,k)&=\pm(\omega^{j+2k}-\omega^{i+2k}-\omega^{2j+k}-\omega^{2i+j}+\omega^{i+2j}+\omega^{2i+k})\\
&=\pm(\omega^{i+2j+2}-\omega^{2i+j+2}-\omega^{2i+j+1}-\omega^{2i+j}+\omega^{i+2j}+\omega^{i+2j+1})\\
&=\pm(\omega^{i+2j}-\omega^{2i+j})(1+\omega+\omega^2)=0,
\end{align*}
so $F_B^{\alpha,\beta,\gamma}$ is the zero vector. Similarly, using $i+j+k=2$, $F_W^{\alpha,\beta,\gamma}$ is the zero vector. Combined with the above, we get that if $\{\alpha,\beta,\gamma\}=\{\delta,\delta\omega,\delta\omega^2\}$ for some $(3n+3)$-th root of unity $\delta$, then $F^{\alpha,\beta,\gamma}_B$ and $F^{\alpha,\beta,\gamma}_W$ are both the zero vector.

With the previous paragraph in mind, consider the set $\Omega'$ of triples of pairwise distinct $\alpha,\beta,\gamma\in\mathbb{C}$ satisfying $\alpha^{3n+3}=\beta^{3n+3}=\gamma^{3n+3}=1$, and $\{\alpha,\beta,\gamma\}\not=\{\delta,\delta\omega,\delta\omega^2\}$ for any $(3n+3)$-th root of unity $\delta$. Define an equivalence relation $\sim$ on $\Omega'$, where $(\alpha,\beta,\gamma)\sim(\alpha',\beta',\gamma')$ if and only if there is a permutation $\sigma$ of $\{\alpha,\beta,\gamma\}$ and a $(3n+3)$-th root of unity $\delta$ such that $(\sigma(\alpha),\sigma(\beta),\sigma(\gamma))=\delta(\alpha',\beta',\gamma')$. Then, the number of equivalence classes is 
\[\frac{(3n+3)(3n+2)(3n+1)-6(n+1)}{6(3n+3)}=\frac{3n(n+1)}2.\]
Let $\Omega\subset \Omega'$ be a set containing exactly one representative from each equivalence class of $\Omega'$ with respect to $\sim$.

\medskip
\textbf{IV. Linear independence}
\begin{claim}\label{claim:indep}
For every $\ast\in\{B,W\}$ and any $(\alpha,\beta,\gamma),(\alpha',\beta',\gamma')\in \Omega$, let
\[\langle F^{\alpha,\beta,\gamma}_{\ast},F^{\alpha',\beta',\gamma'}_{\ast}\rangle=\sum_{(i,j,k)\in\ast}F^{\alpha,\beta,\gamma}_{\ast}(i,j,k)\overline{F^{\alpha',\beta',\gamma'}_{\ast}(i,j,k)}\]
Then,
\[\langle F^{\alpha,\beta,\gamma}_{\ast},F^{\alpha',\beta',\gamma'}_{\ast}\rangle=
\begin{cases}
(3n+3)^2 & \text{ if }(\alpha,\beta,\gamma)=(\alpha',\beta',\gamma'),\\
0 & \text{ if }(\alpha,\beta,\gamma)\not=(\alpha',\beta',\gamma').    
\end{cases}\]
\end{claim}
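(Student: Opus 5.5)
The plan is to turn the sum over the triangular vertex set $\ast$ into a sum over a full discrete torus, where characters are orthogonal. Write $N=3n+3$ and $P_c=\{(i,j,k)\in\mathbb{Z}^3: i+j+k=c\}$, with $c=1$ for $\ast=B$ and $c=2$ for $\ast=W$.

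\textbf{Step 1: symmetries of $F$.} For any $(3n+3)$-th roots of unity, $F^{\alpha,\beta,\gamma}$ is invariant under translations by the lattice $\Lambda=N\cdot\langle (1,-1,0),(0,1,-1)\rangle$, because every coordinate exponent is only relevant modulo $N$. It is anti-invariant under each coordinate transposition, since $f^{\alpha,\beta,\gamma}(j,i,k)=f^{\beta,\alpha,\gamma}(i,j,k)$, and similarly for the other transpositions.

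Consequently it is anti-invariant under the reflections in $L_1$, $L_3$ and $L_2$. For $L_2$, the reflection is $(i,j,k)\mapsto(k+N,j,i-N)$, which is a transposition composed with a translation in $\Lambda$.

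Hence the product $F^{\alpha,\beta,\gamma}\overline{F^{\alpha',\beta',\gamma'}}$ is invariant under the whole affine reflection group $W_{\mathrm{aff}}$ generated by these three reflections. This group is the affine Weyl group of type $A_2$ scaled by $N$, whose translation subgroup is exactly $\Lambda$. Moreover, the product vanishes on every reflecting line, i.e.\ whenever two coordinates differ by a multiple of $N$.

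\textbf{Step 2: unfolding to the torus.} The closed triangle bounded by $L_1,L_2,L_3$ is an alcove of $W_{\mathrm{aff}}$. The quotient $W_{\mathrm{aff}}/\Lambda\cong S_3$ acts simply transitively on the alcoves of the torus $P_c/\Lambda$, which has $N^2$ points.

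So every point of $P_c/\Lambda$ either lies on a wall, where the summand is $0$, or lies in the interior of exactly one of six alcove images of the triangle. The interior lattice points of the triangle in $P_c$ are exactly the vertex set $\ast$: the black vertices for $c=1$ and the white vertices for $c=2$.

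I expect this to be the main obstacle. One has to verify carefully that the six images of $\ast$ are pairwise disjoint modulo $\Lambda$, and that no point of $P_c$ off the walls is missed. I would check this directly with the coordinate description: a point is off the walls exactly when $i,j,k$ are pairwise incongruent modulo $N$, and sorting the residues picks out a unique element of $S_3$. It should also be checked that the boundary coordinates from Figure~\ref{fig:coordinate} match the walls in both parities $c=1,2$. Granting this,
\[6\,\langle F^{\alpha,\beta,\gamma}_{\ast},F^{\alpha',\beta',\gamma'}_{\ast}\rangle=\sum_{x\in P_c/\Lambda}F^{\alpha,\beta,\gamma}(x)\overline{F^{\alpha',\beta',\gamma'}(x)}.\]

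\textbf{Step 3: character orthogonality.} Expand both $F$'s into their six signed terms $\sgn(\sigma)f^{\sigma(\alpha,\beta,\gamma)}$ and $\sgn(\tau)f^{\tau(\alpha',\beta',\gamma')}$. Each cross term is $f^{x,y,z}$ with $(x,y,z)=\sigma(\alpha,\beta,\gamma)/\tau(\alpha',\beta',\gamma')$ taken coordinatewise, a character of $\mathbb{Z}^3$ trivial on $N\mathbb{Z}^3$.

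Summed over the $N^2$ points of $P_c/\Lambda$, such a character gives $N^2x^c$ if $x=y=z$, and $0$ otherwise. The reason is that it restricts to a character on the direction lattice spanned by $(1,-1,0),(0,1,-1)$, and that restricted character is trivial iff $x=y=z$.

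So the torus sum equals $N^2\sum\sgn(\sigma)\sgn(\tau)\delta^c$, the sum running over pairs $(\sigma,\tau)$ with $\sigma(\alpha,\beta,\gamma)=\delta\,\tau(\alpha',\beta',\gamma')$.

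\textbf{Step 4: evaluate the sum.} If the two triples are distinct elements of $\Omega$, they lie in different $\sim$-classes, so no such pair exists and the inner product is $0$.

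If the triples are equal, a solution with $\delta\neq1$ would give $\{\alpha,\beta,\gamma\}=\delta\{\alpha,\beta,\gamma\}$. Comparing products, $\delta^3=1$, so $\delta\in\{\omega,\omega^2\}$, which forces the set to be $\{\varepsilon,\varepsilon\omega,\varepsilon\omega^2\}$; this is excluded from $\Omega'$.

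Thus $\delta=1$, and since $\alpha,\beta,\gamma$ are distinct we get $\sigma=\tau$. This gives six terms each equal to $+1$. The torus sum is therefore $6N^2$, and dividing by $6$ yields $\langle F_\ast,F_\ast\rangle=(3n+3)^2$.
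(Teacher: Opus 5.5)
Your proposal is correct and is essentially the paper's argument. The six alcove images of $\ast$ modulo your lattice $\Lambda$ are the six coordinate-permuted copies forming the paper's hexagon $B^+$. Your unfolding to $P_c/\Lambda$ is the paper's bijection $\Phi\colon B^+\to\overline{B^+}$ followed by adding the wall points of $\overline{B}$, where $F$ vanishes. Steps~3--4 are the same character-orthogonality computation, with the diagonal case (forcing $\delta=1$ and $\sigma=\tau$) spelled out more explicitly than in the paper.

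There is one slip in your suggested direct check of Step~2: sorting the residues in $[0,N-1]$ does not identify the alcove. For $n=2$, both $(2,0,-1)$ and $(4,-1,-2)$ lie in $B$, yet their residues $(2,0,8)$ and $(4,8,7)$ are ordered differently. Instead, sort the actual coordinates of the unique lift with $i+j+k=c$ and $\max\{i,j,k\}-\min\{i,j,k\}<N$, which is exactly what $\Phi^{-1}$ provides.
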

\begin{proof}[Proof of Claim~\ref{claim:indep}]
We only prove the case when $\ast=B$, as the $\ast=W$ case is essentially the same. 

The set of black vertices can be parametrised as
\[B=\{(i,j,k)\in\mathbb{Z}^3:i>j>k>i-3n-3,\;i+j+k=1\}.\]
By permuting the coordinates of each $(i,j,k)\in B$, we obtain five other sets as shown in Figure~\ref{fig:symmetry}. Let $B^+=\cup_{(i,j,k)\in B}\{(i,j,k),(i,k,j),(j,i,k),(j,k,i),(k,i,j),(k,j,i)\}$, and note that 
\[B^+=\{(i,j,k)\in\mathbb{Z}^3:i+j+k=1,\;i,j,k\text{ distinct},\;\max\{i,j,k\}-\min\{i,j,k\}<3n+3\}.\]
Directly from the definition, $F^{\alpha,\beta,\gamma}(i,k,j)=F^{\alpha,\gamma,\beta}(i,j,k)=-F^{\alpha,\beta,\gamma}(i,j,k)$. Thus, \[F^{\alpha,\beta,\gamma}(i,k,j)\overline{F^{\alpha',\beta',\gamma'}(i,k,j)}=F^{\alpha,\beta,\gamma}(i,j,k)\overline{F^{\alpha',\beta',\gamma'}(i,j,k)},\]
and this similarly holds for other permutations of $(i,j,k)$. Therefore, 
\[\langle F^{\alpha,\beta,\gamma}_B,F^{\alpha',\beta',\gamma'}_B\rangle=\sum_{(i,j,k)\in B}F^{\alpha,\beta,\gamma}_B(i,j,k)\overline{F^{\alpha',\beta',\gamma'}_B(i,j,k)}=\frac16\sum_{(i,j,k)\in B^+}F^{\alpha,\beta,\gamma}(i,j,k)\overline{F^{\alpha',\beta',\gamma'}(i,j,k)}.\]

\begin{figure}[h]
    \centering
    \input{symmetry}
    \caption{An illustration of the sets $B^+$ and $\overline{B^+}$ when $n=2$. On the left, the set $B^+$ obtained by permuting the coordinates of each point in $B$. On the right, the black points are now labelled with their new coordinates in $\overline{B^+}$. The red points are the rest of the points in $\overline{B}$, with repetition. They all lie on the boundaries and have repeated coordinates.}\label{fig:symmetry}
\end{figure}

Let $\overline{B^+}=\{(i,j,k)\in\mathbb{Z}^3:0\leq i,j,k\leq 3n+2,\;i,j,k\text{ distinct},\;i+j+k\equiv1\pmod{3n+3}\}$. Define $\Phi:B^+\to\overline{B^+}$ by $\Phi(i,j,k)=(i',j',k')$, where $i'=i+3n+3$ if $i<0$ and $i'=i$ otherwise, and similarly for $j'$ and $k'$. Since $\max\{i,j,k\}-\min\{i,j,k\}<3n+3$ and $i+j+k=1$, each of $i,j,k$ is at least $-3n-2$, and if two of $i',j',k'$ are equal then their counterparts in $i,j,k$ are equal as well. Thus, $\Phi$ does map into $\overline{B^+}$. We claim that $\Phi$ is a bijection. Indeed, first suppose there exist $(i_1,j_1,k_1)\not=(i_2,j_2,k_2)$, but $\Phi(i_1,j_1,k_1)=\Phi(i_2,j_2,k_2)$. Then, since $i_1+j_1+k_1=i_2+j_2+k_2=1$, and each corresponding coordinate pair can differ by at most $3n+3$, we can assume, without loss of generality, that $i_1=i_2+3n+3$, and $j_1=j_2-3n-3$. Then, $i_1-j_1=i_2-j_2+6n+6$, so one of $|i_1-j_1|$ and $|i_2-j_2|$ is at least $3n+3$, a contradiction. Thus, $\Phi$ is injective. Now let $(i',j',k')\in\overline{B^+}$, and assume without loss of generality that $i'<j'<k'$. If $i'+j'+k'=1$, then $\Phi(i',j',k')=(i',j',k')$. If $i'+j'+k'=3n+4$, then $(i',j',k'-3n-3)\in B^+$, and $\Phi(i',j',k'-3n-3)=(i',j',k')$. If $i'+j'+k'=6n+7$, then $(i',j'-3n-3,k'-3n-3)\in B^+$, and $\Phi(i',j'-3n-3,k'-3n-3)=(i',j',k')$. Thus, $\Phi$ is surjective and therefore bijective. 

Recall that $\alpha,\beta,\gamma,\alpha',\beta',\gamma'$ are all $(3n+3)$-th roots of unity, so $F^{\alpha,\beta,\gamma}(i,j,k)=F^{\alpha,\beta,\gamma}(\Phi(i,j,k))$ for all $(i,j,k)\in B^+$, and similarly for $F^{\alpha',\beta',\gamma'}$. Also recall that $F^{\alpha,\beta,\gamma}$ and $F^{\alpha',\beta',\gamma'}$ both vanish on all points $(i,j,k)$ that contain two coordinates that differ by a multiple of $3n+3$. Therefore, if $\overline{B}=\{(i,j,k)\in\mathbb{Z}^3:0\leq i,j,k\leq 3n+2,\;i+j+k\equiv1\pmod{3n+3}\}$, then
\begin{align*}
\langle F^{\alpha,\beta,\gamma}_B,F^{\alpha',\beta',\gamma'}_B\rangle&=\frac16\sum_{(i,j,k)\in B^+}F^{\alpha,\beta,\gamma}(i,j,k)\overline{F^{\alpha',\beta',\gamma'}(i,j,k)}\\
&=\frac16\sum_{(i,j,k)\in B^+}F^{\alpha,\beta,\gamma}(\Phi(i,j,k))\overline{F^{\alpha',\beta',\gamma'}(\Phi(i,j,k))}\\
&=\frac16\sum_{(i,j,k)\in\overline{B^+}}F^{\alpha,\beta,\gamma}(i,j,k)\overline{F^{\alpha',\beta',\gamma'}(i,j,k)}\\
&=\frac16\sum_{(i,j,k)\in\overline{B}}F^{\alpha,\beta,\gamma}(i,j,k)\overline{F^{\alpha',\beta',\gamma'}(i,j,k)}.
\end{align*}

To continue, we first consider the sum above with $f$ in place of $F$. For any $(3n+3)$-th roots of unity $x,y,z,x',y',z'$, we have
\begin{align*}
\sum_{(i,j,k)\in\overline{B}}f^{x,y,z}(i,j,k)\overline{f^{x',y',z'}(i,j,k)}&=\sum_{(i,j,k)\in\overline{B}}\left(\frac x{x'}\right)^i\left(\frac{y}{y'}\right)^j\left(\frac{z}{z'}\right)^k\\
&=\sum_{i=0}^{3n+2}\sum_{j=0}^{3n+2}\left(\frac x{x'}\right)^i\left(\frac{y}{y'}\right)^j\left(\frac{z}{z'}\right)^{1-i-j}\\
&=\frac z{z'}\left(\sum_{i=0}^{3n+2}\left(\frac{xz'}{x'z}\right)^{i}\right)\left(\sum_{j=0}^{3n+2}\left(\frac{yz'}{y'z}\right)^j\right).
\end{align*}
This is 0 unless $(x,y,z)$ is a scalar multiple of $(x',y',z')$, in which case this is $(3n+3)^2z/z'$.

Recall that $(\alpha,\beta,\gamma),(\alpha',\beta',\gamma')\in \Omega$, so if $(\alpha,\beta,\gamma)\not=(\alpha',\beta',\gamma')$, then from the definition of the equivalence relation $\sim$, $(\sigma(\alpha),\sigma(\beta),\sigma(\gamma))$ is not a scalar multiple of $(\alpha',\beta',\gamma')$ for any permutation $\sigma$ of $\{\alpha,\beta,\gamma\}$. Thus, by expanding both $F$ terms into $f$ terms,
\[\langle F^{\alpha,\beta,\gamma}_B,F^{\alpha',\beta',\gamma'}_B\rangle=\frac16\sum_{(i,j,k)\in\overline{B}}F^{\alpha,\beta,\gamma}(i,j,k)\overline{F^{\alpha',\beta',\gamma'}(i,j,k)}=0.\]
On the other hand, if $(\alpha,\beta,\gamma)=(\alpha',\beta',\gamma')$, then using that $\alpha,\beta,\gamma$ are pairwise distinct, and $\{\alpha,\beta,\gamma\}\not=\{\delta,\delta\omega,\delta\omega^2\}$ for any $(3n+3)$-th root of unity $\delta$, we have
\[\langle F^{\alpha,\beta,\gamma}_B,F^{\alpha,\beta,\gamma}_B\rangle=\frac16\sum_{(i,j,k)\in\overline{B}}F^{\alpha,\beta,\gamma}(i,j,k)\overline{F^{\alpha,\beta,\gamma}(i,j,k)}=\frac16\cdot6(3n+3)^2=(3n+3)^2.\]
This proves the claim.
\renewcommand{\qedsymbol}{$\boxdot$}
\end{proof}
\renewcommand{\qedsymbol}{$\square$}

For each $\ast\in\{B,W\}$, let $V_\ast=\{F^{\alpha,\beta,\gamma}_{\ast}:(\alpha,\beta,\gamma)\in \Omega\}$. Since $|\Omega|=3n(n+1)/2$ is equal to the dimension of $M_n$, by Claim~\ref{claim:indep} and standard linear algebra, $V_B$ is a basis of $\mathbb{C}^B$ and $V_W$ is a basis of $\mathbb{C}^W$. Then, Claim~\ref{claim:eigen} implies that when represented in the new bases $V_B$ and $V_W$, $M_n$ becomes diagonal with the set of diagonal entries being $\{\alpha+\beta+\gamma:(\alpha,\beta,\gamma)\in \Omega\}$. 

\medskip
\textbf{V. Determinant calculation}

Note that Claim~\ref{claim:indep} also implies that if $M_B$ is the matrix whose columns are vectors in $V_B$, and $M_W$ is the matrix whose columns are vectors in $V_W$, then $|\det(M_B)|=|\det(M_W)|=(3n+3)^{|\Omega|}$. Therefore,
\[|\det(M_n)|=\frac{|\det(M_B)|}{|\det(M_W)|}\prod_{(\alpha,\beta,\gamma)\in \Omega}|\alpha+\beta+\gamma|=\prod_{(\alpha,\beta,\gamma)\in \Omega}|\alpha+\beta+\gamma|.\]

Let $\Lambda=\{(1,\zeta^a,\zeta^b):1\leq a<b\leq 3n+2,(a,b)\not=(n+1,2n+2)\}$, and note that $|\Lambda|=(3n+2)(3n+1)/2-1=9n(n+1)/2=3|\Omega|$. Let $H$ be an auxiliary bipartite graph with bipartition classes $\Omega$ and $\Lambda$, such that $(\alpha,\beta,\gamma)\in \Omega$ and $(1,\zeta^a,\zeta^b)\in \Lambda$ are connected with an edge if and only if $(\alpha,\beta,\gamma)\sim(1,\zeta^a,\zeta^b)$. For every $(\alpha,\beta,\gamma)\in \Omega$, by permuting $\alpha,\beta,\gamma$, there exists $0\leq p<q<r\leq 3n+2$ such that $(\alpha,\beta,\gamma)\sim(\zeta^p,\zeta^q,\zeta^r)$. Let $u=q-p$, $v=r-q$, and $w=p-r+3n+3$. Then $u,v,w>0$, $u+v+w=3n+3$, and $(u,v,w)\not=(n+1,n+1,n+1)$. Furthermore, by dividing $(\zeta^p,\zeta^q,\zeta^r)$ by each of $\zeta^p$, $\zeta^q$, $\zeta^r$, and then reordering, we get $(\alpha,\beta,\gamma)\sim(1,\zeta^u,\zeta^{u+v})\sim(1,\zeta^v,\zeta^{v+w})\sim(1,\zeta^w,\zeta^{w+u})$. Moreover, the last three triples are pairwise distinct elements of $\Lambda$, so every vertex in $\Omega$ has degree at least 3 in $H$. On the other hand, since $\Omega$ contains at most one element from each equivalence class, every vertex in $\Lambda$ has degree at most 1 in $H$. Thus, every vertex in $\Omega$ has degree exactly 3, and every vertex in $\Lambda$ has degree exactly 1. Therefore,
\[T_n=|\det(M_n)|=\prod_{(\alpha,\beta,\gamma)\in \Omega}|\alpha+\beta+\gamma|=\prod_{(1,\zeta^a,\zeta^b)\in \Lambda}|1+\zeta^a+\zeta^b|^{1/3}=\prod_{\substack{1\leq a<b\leq 3n+2\\(a,b)\not=(n+1,2n+2)}}|1+\zeta^a+\zeta^b|^{1/3},\]
which completes the proof.
\end{proof}

\section{Prime factors}\label{sec:factor}
In this section, we prove Theorem~\ref{thm:factor}, which gives both an asymptotic growth rate for $T_n$ and explanations of why $T_n$ has many prime factors. 

We begin by recalling the following facts in Galois Theory. 
\begin{lemma}\label{lemma:rootgalois}
Let $N$ be a positive integer, and let $\xi=e^{2\pi i/N}$. Then, $\mathbb{Q}(\xi)/\mathbb{Q}$ is a Galois extension, and the Galois group $\textup{Gal}(\mathbb{Q}(\xi)/\mathbb{Q})$ is isomorphic to $\mathbb{Z}_N^\times$ via the map $\sigma_t\mapsto t$ for every $t\in\mathbb{Z}_N^\times$, where $\sigma_t$ is the automorphism of $\mathbb{Q}(\xi)$ mapping $\xi$ to $\xi^t$. If $z\in\mathbb{Q}(\xi)$ is fixed by $\sigma_t$ for all $t\in\mathbb{Z}_N^\times$, then $z\in\mathbb{Q}$.
\end{lemma}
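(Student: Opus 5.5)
The statement is the standard structure theorem for cyclotomic fields, and I would prove it in three steps.

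First, normality. $\mathbb{Q}(\xi)$ is the splitting field over $\mathbb{Q}$ of $x^N-1$. The roots of this polynomial are exactly the powers $\xi^k$, $0\le k<N$, and all of them lie in $\mathbb{Q}(\xi)$. Since we are in characteristic $0$, the extension is separable. Hence $\mathbb{Q}(\xi)/\mathbb{Q}$ is Galois.

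Second, the map to $\mathbb{Z}_N^\times$. Any $\sigma\in\textup{Gal}(\mathbb{Q}(\xi)/\mathbb{Q})$ sends $\xi$ to another primitive $N$-th root of unity. This is because $\sigma$ preserves multiplicative order, as $\sigma(\xi)^k=1$ if and only if $\xi^k=1$. So $\sigma(\xi)=\xi^t$ for a unique $t\in\mathbb{Z}_N^\times$. Since $\sigma$ is determined by its value on the generator $\xi$, the map $\sigma\mapsto t$ is injective. It is a homomorphism because
\[\sigma_s\sigma_t(\xi)=\sigma_s(\xi^t)=\xi^{st}.\]

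Third, surjectivity, which is the main obstacle. It is equivalent to the irreducibility of the cyclotomic polynomial $\Phi_N(x)$ over $\mathbb{Q}$, or equivalently to $[\mathbb{Q}(\xi):\mathbb{Q}]=\varphi(N)$. I would use Dedekind's classical argument.
\begin{itemize}
\item Let $f\in\mathbb{Z}[x]$ be the minimal polynomial of $\xi$. It is monic with integer coefficients by Gauss's lemma, since it divides $x^N-1$.
\item Let $p$ be a prime with $p\nmid N$. I will show $\xi^p$ is also a root of $f$. Suppose not. Then $\xi^p$ is a root of $g$, where $x^N-1=fg$, so $f(x)$ divides $g(x^p)$.
\item Reduce mod $p$. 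Then $g(x^p)\equiv g(x)^p$, so $\bar f$ and $\bar g$ share a common irreducible factor. This makes $x^N-1$ have a repeated root over $\mathbb{F}_p$.
\item That is impossible, because the derivative $Nx^{N-1}$ is coprime to $x^N-1$ when $p\nmid N$.
\end{itemize}
Iterating over the prime factorisation of any $t$ coprime to $N$ shows $\xi^t$ is a root of $f$. Therefore $\deg f=\varphi(N)$, every $\sigma_t$ exists as an automorphism, and the map is an isomorphism.

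Finally, the fixed-field statement is the fundamental theorem of Galois theory applied to this Galois extension. The fixed field of the full group $\textup{Gal}(\mathbb{Q}(\xi)/\mathbb{Q})=\{\sigma_t : t\in\mathbb{Z}_N^\times\}$ is the base field $\mathbb{Q}$. So any $z$ fixed by every $\sigma_t$ lies in $\mathbb{Q}$.

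Since this is textbook material, in the paper I would likely just cite a standard algebra reference rather than reproduce the irreducibility argument in full.
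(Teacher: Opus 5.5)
Your argument is correct: it is the standard textbook proof, using the splitting field for normality, order preservation for the injection into $\mathbb{Z}_N^\times$, Dedekind's mod-$p$ argument for the irreducibility of $\Phi_N$, and the Galois correspondence for the fixed field. The paper gives no proof and simply recalls the lemma as a known fact of Galois theory, so your plan to cite a standard reference matches what it does. One step deserves to be made explicit: $f\mid g(x^p)$ holds in $\mathbb{Z}[x]$, not just in $\mathbb{Q}[x]$, because $f$ is monic, and this is what justifies reducing mod $p$.
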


\begin{lemma}\label{lemma:minpoly}
Let $K/\mathbb{Q}$ be a finite Galois extension, and let $z\in K$ be an algebraic integer. Then, the set of distinct conjugates of $z$ over $\mathbb{Q}$ is $C_z=\{\sigma(z):\sigma\in\textup{Gal}(K/\mathbb{Q})\}$. The minimal polynomial of $z$ over $\mathbb{Q}$ is $\prod_{y\in C_z}(x-y)$, and is monic with integer coefficients.
\end{lemma}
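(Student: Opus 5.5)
The statement just above is Lemma~\ref{lemma:minpoly}, a standard fact about Galois extensions. My plan is to prove it in three steps: identify the conjugates, identify the minimal polynomial, and then prove integrality of its coefficients.

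\textbf{Conjugates.} Let $m(x)\in\mathbb{Q}[x]$ be the minimal polynomial of $z$. Since $K/\mathbb{Q}$ is Galois, it is normal. Hence $m$ splits completely in $K$, and all conjugates of $z$ (the roots of $m$) lie in $K$. Every $\sigma\in\textup{Gal}(K/\mathbb{Q})$ fixes the coefficients of $m$, so $\sigma(z)$ is a root of $m$; this gives $C_z\subseteq\{\text{roots of }m\}$. Conversely, let $y$ be a root of $m$. Irreducibility of $m$ gives an isomorphism $\mathbb{Q}(z)\to\mathbb{Q}(y)$ sending $z\mapsto y$. This isomorphism extends to an automorphism of $K$, because $K$ is a splitting field over $\mathbb{Q}$, and hence over both $\mathbb{Q}(z)$ and $\mathbb{Q}(y)$. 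So $y\in C_z$.

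\textbf{Minimal polynomial.} A finite Galois extension has characteristic zero, so it is separable and $m$ has distinct roots. Since $m$ is monic with root set exactly $C_z$, we get $m(x)=\prod_{y\in C_z}(x-y)$.

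\textbf{Integrality.} This is the only step with real content. Since $z$ is an algebraic integer, there is a monic $g\in\mathbb{Z}[x]$ with $g(z)=0$. Then $m\mid g$ in $\mathbb{Q}[x]$, and every $y\in C_z$ is also a root of $g$, hence an algebraic integer. The coefficients of $m$ are elementary symmetric polynomials in the elements of $C_z$. Because algebraic integers form a ring, these coefficients are algebraic integers. They are also rational, and a rational algebraic integer is an integer by the rational root theorem applied to its monic integer polynomial. Hence $m\in\mathbb{Z}[x]$.

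An alternative to the ring property is Gauss's lemma: if we factor $g=m\,h$ with $m,h$ monic in $\mathbb{Q}[x]$, then $m$ and $h$ must already lie in $\mathbb{Z}[x]$. I would cite whichever of these two standard facts the intended audience accepts.
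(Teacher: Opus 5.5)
Your proof is correct. The paper gives no proof of this lemma and only recalls it as a standard fact from Galois theory, so there is no argument of the paper's to compare against.

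What you give is the usual textbook proof:
\begin{itemize}
\item conjugates: the roots of an irreducible polynomial form a single Galois orbit, because the isomorphism $\mathbb{Q}(z)\to\mathbb{Q}(y)$ extends to an automorphism of the splitting field $K$;
\item the product formula: separability of $m$ gives distinct roots;
\item integrality: the coefficients of $m$ are algebraic integers and rational, hence integers.
\end{itemize}

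Your integrality step is essentially the reasoning the paper itself uses in part a) of the proof of Theorem~\ref{thm:factor} to show $P_n(x)\in\mathbb{Z}[x]$. There the paper argues that the coefficients are Galois-fixed, hence rational, and are also algebraic integers, hence integers.

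One small wording point: $m$ is separable because $\mathbb{Q}$ has characteristic zero, or directly because $K/\mathbb{Q}$ is Galois. It is not because finite Galois extensions have characteristic zero in general; Galois extensions of finite fields exist.
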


For any non-zero polynomial $p(x_1,\ldots,x_n)\in\mathbb{C}[x_1,\ldots,x_n]$, Mahler~\cite{Mah} defined what is now known as the Mahler measure of $p$ as
\[M(p)=\exp\left(\int_0^1\cdots\int_0^1\log|p(e^{2\pi i t_1},\ldots,e^{2\pi i t_n})|\text{d}t_1\cdots\text{d}t_n\right).\]
This can be viewed as a logarithmic average of $p$ on the unit torus. To prove~\ref{factor3}, we need the following result of Smyth~{\cite{S}} on the Mahler measure of the bivariate polynomial $1+x+y$.
\begin{lemma}[{\cite[Example 5]{S}}]\label{lemma:smyth}
Let $\chi_{-3}:\mathbb{Z}\to\{0,\pm1\}$ be the primitive quadratic character modulo 3 given by $\chi_{-3}(n)=0$ if $3\mid n$, $\chi_{-3}(n)=1$ if $n\equiv 1\pmod 3$, and $\chi_{-3}(n)=-1$ otherwise. Let $L(2,\chi_{-3})=\sum_{n=1}^\infty\chi_{-3}(n)n^{-2}$ be the Dirichlet $L$-function for $\chi_{-3}$ evaluated at 2.
Then, 
\[M(1+x+y)=\exp\left(\frac{3\sqrt3}{4\pi}L(2,\chi_{-3})\right).\]
\end{lemma}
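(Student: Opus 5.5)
The statement is Smyth's classical evaluation of $M(1+x+y)$. I would prove it by computing the inner integral over $y$ with Jensen's formula and then evaluating a one-variable integral of $\log|1+x|$ through its Fourier series.

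\textbf{Reduction to one variable.} Fix $x=e^{2\pi i t_1}$ and integrate over $t_2$. Jensen's formula gives $\int_0^1\log|a+e^{2\pi i t_2}|\,\mathrm{d}t_2=\log\max\{|a|,1\}$. Applying this with $a=1+x$ yields
\[
\log M(1+x+y)=\int_0^1\log^+|1+e^{2\pi i t}|\,\mathrm{d}t .
\]
Write $t=\theta/(2\pi)$ with $\theta\in(-\pi,\pi]$. Then $|1+e^{i\theta}|=2|\cos(\theta/2)|$, and this exceeds $1$ exactly when $|\theta|<2\pi/3$. By symmetry in $\theta$,
\[
\log M(1+x+y)=\frac1\pi\int_0^{2\pi/3}\log\bigl(2\cos(\theta/2)\bigr)\,\mathrm{d}\theta .
\]

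\textbf{Fourier expansion.} For $|\theta|<\pi$ we have $\log|1+e^{i\theta}|=\operatorname{Re}\log(1+e^{i\theta})=\sum_{m\ge1}\frac{(-1)^{m-1}}{m}\cos(m\theta)$. The series converges boundedly away from $\theta=\pi$, so on $[0,2\pi/3]$ it may be integrated term by term. This gives
\[
\log M(1+x+y)=\frac1\pi\sum_{m\ge1}\frac{(-1)^{m-1}}{m^2}\sin\Bigl(\frac{2\pi m}{3}\Bigr).
\]

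\textbf{Identifying the $L$-value.} Since $\sin(2\pi m/3)=\frac{\sqrt3}{2}\chi_{-3}(m)$, the sum equals $\frac{\sqrt3}{2\pi}\sum_{m\ge1}(-1)^{m-1}\chi_{-3}(m)m^{-2}$. Split into odd and even $m$, and use $\chi_{-3}(2m')=\chi_{-3}(2)\chi_{-3}(m')=-\chi_{-3}(m')$:
\[
\sum_{m\ge1}(-1)^{m-1}\frac{\chi_{-3}(m)}{m^2}=L(2,\chi_{-3})-2\sum_{m'\ge1}\frac{\chi_{-3}(2m')}{(2m')^2}=L(2,\chi_{-3})+\tfrac12L(2,\chi_{-3})=\tfrac32L(2,\chi_{-3}).
\]
Multiplying by $\frac{\sqrt3}{2\pi}$ gives $\log M(1+x+y)=\frac{3\sqrt3}{4\pi}L(2,\chi_{-3})$, which is the claim.

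\textbf{Main obstacle.} The analytic justifications are the delicate part. Jensen's formula must still be applied when $|1+x|=1$, although those $x$ form a measure-zero set and do not affect the integral. The term-by-term integration also needs justifying, either by bounded convergence of the partial sums on $[0,2\pi/3]$ or by Abel summation with $1+re^{i\theta}$ and letting $r\to1^-$. Beyond that, care is needed with the factor $\chi_{-3}(2)=-1$ in the even/odd splitting.
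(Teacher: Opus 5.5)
Your proof is correct; every step checks. Jensen's formula reduces $\log M(1+x+y)$ to $\frac1\pi\int_0^{2\pi/3}\log(2\cos(\theta/2))\,\mathrm{d}\theta$. Term-by-term integration of the Fourier series of $\log|1+e^{i\theta}|$ then gives $\frac1\pi\sum_{m\ge1}(-1)^{m-1}m^{-2}\sin(2\pi m/3)$. Finally, $\sin(2\pi m/3)=\frac{\sqrt3}{2}\chi_{-3}(m)$ and $\chi_{-3}(2)=-1$ produce the factor $\frac32$; numerically both sides give $\log M(1+x+y)\approx 0.32307$.

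The paper gives no argument for this lemma at all. It simply imports Smyth's evaluation, since the lemma is only used to identify the constant in part~\ref{factor3} of Theorem~\ref{thm:factor}. Your derivation is essentially the standard computation: the integral is the Clausen value $\frac1\pi\mathrm{Cl}_2(\pi/3)$. The citation keeps the paper focused on the tiling result. Your route makes the lemma self-contained and shows where $L(2,\chi_{-3})$ comes from, at the cost of some routine analysis.

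Your justifications of that analysis are adequate and can be stated a little more cleanly:
\begin{itemize}
\item Jensen's formula $\int_0^1\log|a+e^{2\pi i t}|\,\mathrm{d}t=\log\max\{|a|,1\}$ holds also when $|a|=1$, so no exceptional set is needed.
\item Exchanging the $t_1$ and $t_2$ integrals requires $\log|1+x+y|$ to be integrable on the torus. This holds because it is bounded above by $\log 3$ and its only singularities, at $(\omega,\omega^2)$ and $(\omega^2,\omega)$, are logarithmic.
\item On $[0,2\pi/3]$ the partial sums of $\sum_m(-e^{i\theta})^m$ are bounded by $2/|1+e^{i\theta}|\le 2$. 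Summation by parts therefore gives uniform convergence of the Fourier series there, which justifies the termwise integration.
\end{itemize}
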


Equipped with the results above, we can now prove Theorem~\ref{thm:factor}.

\begin{proof}[Proof of Theorem~\ref{thm:factor}]
For every positive integer $n$, let $I_n=\{(a,b)\in\mathbb{Z}^2:1\leq a<b\leq 3n+2,\;(a,b)\not=(n+1,2n+2)\}$. For every $(a,b)\in I_n$, let $z_{(a,b)}=-\zeta^a-\zeta^b$. Since $\zeta$ is an algebraic integer, and the set of algebraic integers is closed under addition and multiplication, $z_{(a,b)}$ is an algebraic integer for every $(a,b)\in I_n$, so its minimal polynomial over $\mathbb{Q}$ is in $\mathbb{Z}[x]$. Define a monic polynomial $P_n(x)$ with $|P_n(1)|=T_n^3$ by
\[P_n(x)=\prod_{(a,b)\in I_n}(x+\zeta^a+\zeta^b)=\prod_{(a,b)\in I_n}(x-z_{(a,b)}).\]

\ref{factor1}: By Lemma~\ref{lemma:rootgalois}, the elements in $\textup{Gal}(\mathbb{Q}(\zeta)/\mathbb{Q})$ are exactly the automorphisms $\sigma_t$ for every $t\in\mathbb{Z}_{3n+3}^\times$, which sends $\zeta$ to $\zeta^t$. Let $t\in\mathbb{Z}_{3n+3}^\times$. For every $(a,b)\in I_n$, there exist $1\leq a',b'\leq 3n+2$ satisfying $a'\equiv ta\pmod{3n+3}$ and $b'\equiv tb\pmod{3n+3}$. Also, multiplication by any $t\in\mathbb{Z}_{3n+3}^\times$ fixes the set $\{n+1,2n+2\}$, so $\{a',b'\}\not=\{n+1,2n+2\}$, as otherwise $\{a,b\}=\{n+1,2n+2\}$. Define $\psi_t(a,b)=(a',b')$ if $a'<b'$, and $\psi_t(a,b)=(b',a')$ otherwise. Then, $\psi_t$ is a map from $I_n$ to $I_n$ with inverse $\psi_{t^{-1}}$, and hence a bijection. Thus, for every $t\in\mathbb{Z}_{3n+3}^\times$, applying $\sigma_t$ to the coefficients of $P_n(x)$ gives
\[\prod_{(a,b)\in I_n}(x+\zeta^{at}+\zeta^{bt})=\prod_{(a,b)\in I_n}(x-z_{\psi_t(a,b)})=\prod_{(a,b)\in I_n}(x-z_{(a,b)})=P_n(x).\]
Since the coefficients of $P_n(x)$ are fixed by every element of $\textup{Gal}(\mathbb{Q}(\zeta)/\mathbb{Q})$, $P_n(x)\in\mathbb{Q}[x]$ by Lemma~\ref{lemma:rootgalois}. Furthermore, since each $z_{(a,b)}$ is an algebraic integer, each coefficient of $P_n(x)$ is an algebraic integer too. Thus, $P_n(x)\in\mathbb{Z}[x]$. It follows that $P_n(1)$ is an integer, so $P_n(1)=\pm T_n^3$.

Let $p$ be any monic irreducible factor of $P_n(x)$ in $\mathbb{Q}[x]$, and let $z_{(a,b)}$ be a root of $p$. Then, $p$ is the minimal polynomial of $z_{(a,b)}$, so $p\in\mathbb{Z}[x]$. Furthermore, by Lemma~\ref{lemma:minpoly}, the degree of $p$ is at most $|\mathbb{Z}_{3n+3}^\times|=\varphi(3n+3)\leq 2n+2$. Thus, $P_n(x)$ can be factored into polynomials in $\mathbb{Z}[x]$, each of degree at most $2n+2$.


\ref{factor2}: Let $d=(n+1)/(m+1)$. Let $\eta=e^{2\pi i/(3m+3)}$, so $\eta=\zeta^d$. Then, the map $g:I_m\to I_n$ given by $g(a',b')=(a'd,b'd)$ is injective, so
\[P_m(x)=\prod_{(a',b')\in I_m}(x+\eta^{a'}+\eta^{b'})=\prod_{(a',b')\in I_m}(x+\zeta^{a'd}+\zeta^{b'd})=\prod_{(a,b)\in g(I_m)}(x+\zeta^{a}+\zeta^{b})\]
divides $P_n(x)$ in $\mathbb{C}[x]$. But both $P_m(x)$ and $P_n(x)$ are monic polynomials in $\mathbb{Z}[x]$, so $P_m(x)$ divides $P_n(x)$ in $\mathbb{Z}[x]$. It follows that $P_m(1)$ divides $P_n(1)$, and so $T_m=|P_m(1)|^{1/3}$ divides $T_n=|P_n(1)|^{1/3}$.

\ref{factor3}: Taking logarithms on both sides of the formula for $T_n$ in Theorem~\ref{thm:main}, we get
\begin{align*}
\log(T_n)&=\frac13\sum_{\substack{1\leq a<b\leq 3n+2\\(a,b)\not=(n+1,2n+2)}}\log|1+\zeta^a+\zeta^b|=\frac16\sum_{\substack{1\leq a,b\leq 3n+2\\a\not=b,\;\{a,b\}\not=\{n+1,2n+2\}}}\log|1+\zeta^a+\zeta^b|\\
&=\frac16\sum_{\substack{0\leq a,b\leq 3n+2\\\{a,b\}\not=\{n+1,2n+2\}}}\log|1+\zeta^a+\zeta^b|-\frac13\sum_{a=0}^{3n+2}\log|2+\zeta^a|-\frac16\sum_{a=0}^{3n+2}\log|1+2\zeta^a|+\frac13\log 3\\
&=\frac16\sum_{\substack{0\leq a,b\leq 3n+2\\\{a,b\}\not=\{n+1,2n+2\}}}\log|1+\zeta^a+\zeta^b|-\frac13\log|(-2)^{3n+3}-1|-\frac16\log|(-1)^{3n+3}-2^{3n+3}|+\frac13\log 3\\
&=\frac16\left(\sum_{\substack{0\leq a,b\leq 3n+2\\\{a,b\}\not=\{n+1,2n+2\}}}\log|1+\zeta^a+\zeta^b|\right)\pm O(n).
\end{align*}
Then, dividing by $(3n+3)^2$ and interpreting the sum as a Riemann sum, we have from Lemma~\ref{lemma:smyth} that
\begin{align*}
\lim_{n\to\infty}\frac1{(3n+3)^2}\log(T_n)&=\lim_{n\to\infty}\frac1{6\cdot(3n+3)^2}\sum_{\substack{0\leq a,b\leq 3n+2\\\{a,b\}\not=\{n+1,2n+2\}}}\log|1+\zeta^a+\zeta^b|\\
&=\frac16\int_{0}^1\int_{0}^1\log|1+e^{2\pi it_1}+e^{2\pi i t_2}|\text{d}t_1\text{d}t_2\\
&=\frac16\log(M(1+x+y))=\frac{\sqrt 3}{8\pi}L(2,\chi_{-3}),
\end{align*}
where the second equality follows since around the singularities $(1/3,2/3)$ and $(2/3,1/3)$, $\log|1+x+y|$ tend to $-\infty$ logarithmically slowly, so the contribution of points near them are negligible. 
Therefore,
\[T_n=\exp\left(\frac{9\sqrt 3}{8\pi}L(2,\chi_{-3})n^2+o(n^2)\right)\approx(1.6235)^{n^2}.\]

\ref{factor4}: Factor $P_n(x)$ into monic irreducible polynomials in $\mathbb{Q}[x]$, and let $\mathcal{P}_n$ be the multiset of such irreducible factors. For each $p\in\mathcal{P}_n$, from the proof of~\ref{factor1}, $p$ is the minimal polynomial of $z_{(a,b)}$ for some $(a,b)\in I_n$, so it is in $\mathbb{Z}[x]$. Moreover, by Lemma~\ref{lemma:rootgalois} and Lemma~\ref{lemma:minpoly}, $|p(1)|$ is equal to the product of $|1-z|$ over the at most $2n+2$ conjugates $z$ of $z_{(a,b)}$, each of the form $z=-\zeta^{a'}-\zeta^{b'}$ for some $(a',b')\in I_n$. Since $|1-z|\leq 3$ for each such conjugate, we have $|p(1)|\leq 3^{2n+2}$. But $\prod_{p\in \mathcal{P}_n}|p(1)|=|P_n(1)|=T_n^3\geq 4^{n^2}$ for large $n$, so at least $\log_{3^{2n+2}}(4^{n^2})\geq 3n/5$ factors $p\in \mathcal{P}_n$ satisfy $|p(1)|>1$. Thus, $|P_n(1)|$ has at least $3n/5$ prime factors, counting multiplicity, so $T_n=|P_n(1)|^{1/3}$ has at least $n/5$ prime factors, counting multiplicity.
\end{proof} 

\section*{Acknowledgement}
The author acknowledges assistance from ChatGPT, which conjectured an equivalent version of the formula for $T_n$ and suggested the useful references~\cite{KPW,S}. The proofs in this paper, as well as the writing and presentation, are the work of the author, who takes full responsibility for the content. 

After putting this paper on arXiv, the author was notified by Christian Krattenthaler that independently, during a series of email exchanges in 2024, an equivalent formula for $T_n$ was found from experimentation by Noam Elkies, then a very similar proof of Theorem~\ref{thm:main} was communicated by Greg Kuperberg.

\bibliographystyle{plain}
\bibliography{bibliography}
\end{document}